\numberwithin{equation}{section}
\newtheorem{theorem}[equation]{Theorem}
\newtheorem{lemma}[equation]{Lemma}
\newtheorem{proposition}[equation]{Proposition}
\theoremstyle{definition}
\newtheorem{remark}[equation]{Remark}
\newcommand{\CC}{\mathbb{C}}
\newcommand{\DD}{\mathbb{D}}
\newcommand{\GG}{\mathbb{G}}
\newcommand{\II}{\mathbb{I}}
\newcommand{\JJ}{\mathbb{J}}
\newcommand{\NN}{\mathbb{N}}
\newcommand{\QQ}{\mathbb{Q}}
\newcommand{\RR}{\mathbb{R}}
\newcommand{\TT}{\mathbb{T}}
\newcommand{\UU}{\mathbb{U}}
\newcommand{\ZZ}{\mathbb{Z}}
\newcommand{\N}{\mathbb{N}}
\newcommand{\calP}{\mathcal{P}}
\newcommand{\calF}{\mathcal{F}}
\newcommand{\calB}{\mathcal{B}}
\newcommand{\calM}{\mathcal{M}}
\newcommand{\calS}{\mathcal{S}}
\newcommand{\ex}{\bm{e}}
\newcommand*{\DMO}[1]{\expandafter\DeclareMathOperator\csname #1\endcsname {#1}}
\DeclarePairedDelimiter\abs{\lvert}{\rvert}
\DeclarePairedDelimiter\norm{\lVert}{\rVert}
\DeclarePairedDelimiter\card{\lvert}{\rvert}
\DeclarePairedDelimiter\floor{\lfloor}{\rfloor}
\DeclarePairedDelimiter\ceil{\lceil}{\rceil}
\DeclarePairedDelimiterX\spr[2]{\langle}{\rangle}{#1,#2}
\newcommand{\ipr}[2]{#1\cdot#2}
\DeclarePairedDelimiterX\Set[2]{\{}{\}}{#1\colon #2}
\DeclarePairedDelimiterX\Seq[1]{(}{)}{#1}
\newcommand\blfootnote[1]{%
  \begingroup
  \renewcommand\thefootnote{}\footnote{#1}%
  \addtocounter{footnote}{-1}%
  \endgroup
}
\begin{document}
\title[Bootstrap methods in bounding discrete Radon operators]
	{Bootstrap methods in bounding discrete Radon operators}

\author{Wojciech S{\l}omian}
\address[Wojciech S{\l}omian]{
	  Faculty of Pure and Applied Mathematics, 
	  Wroc\l{}aw University of Science and Technology\\
	  Wyb{.} Wyspia\'nskiego 27,
	  50-370 Wroc\l{}aw, Poland}
    \email{wojciech.slomian@pwr.edu.pl}
\thanks{The author was
partially supported by the National Science Centre in Poland, grant
Opus 2018/31/B/ST1/00204.}

\begin{abstract}
The aim of this paper is to  develop  bootstrap arguments to establish maximal, oscillation, variational and jump inequalities for the discrete averaging Radon operators on $\ell^p(\mathbb Z^d)$. 
\end{abstract}

\maketitle
		\blfootnote{
The paper is a part of author's doctoral thesis written under the supervision of Professor Mariusz Mirek.
}
\section{Introduction}
\label{sec:1}
Various kinds of seminorms have been intensively studied since Bourgain's seminal papers \cite{B1,B2,B3}, where oscillation and variation seminorms were used to prove the pointwise convergence of the ergodic averages along polynomials. As many authors generalised and extended Bourgain's work, oscillation, variational and jump seminorms became widely used tools in harmonic analysis and ergodic theory. This paper is motivated by the work of Mirek, Stein and Zorin-Kranich \cite{MSZ2} in which they proved, among others, that the so-called bootstrap approach can be used to prove variational and  jump inequalities for continuous Radon operators. Our purpose is to show that one can deal with maximal, oscillation, variational and jump inequalities in the context of discrete Radon operators by using an analogous technique. This approach allows us to prove seminorms inequalities without referring to vector-valued or square function estimates as it was in previously known proofs.  By using this technique we give new proofs of well-known variational and jump results from \cite{MST2} and \cite{MSZ3}, as well as a new proof of the oscillation inequality, recently proven in \cite{MSS}.\\
\indent Let us consider a polynomial mapping
\begin{equation}\label{polymap}
    \mathcal{P}=(\mathcal{P}_1,\dots,\mathcal{P}_{d})\colon\ZZ^k\to\ZZ^{d},
\end{equation}
where each $\calP_j\colon\ZZ^k\to\ZZ$ is a polynomial of $k$ variables with integer coefficients such that $\calP_j(0)=0$. Let $\Omega$ be a bounded convex open subset of $\RR^k$. For any $t>0$ we set
\[
\Omega_t := \{x\in\RR^{k}: t^{-1}x\in\Omega\}.
\]
Moreover, we assume that $B(0,c_{\Omega}) \subseteq \Omega \subseteq B(0,1)$ for some $c_{\Omega}\in(0, 1)$, where $B(x, t)$ is the open Euclidean ball in $\RR^k$ with radius $t>0$ centered at a point $x\in\RR^k$. Here one can think that $\Omega_{t}$ is the open ball of radius $t$ associated with some norm on $\RR^{k}$.

For finitely supported functions $f\colon\ZZ^{d}\to \CC$ and $t>0$, we define the discrete averaging Radon operator by setting
\begin{align}
\label{eq:1}
M_t^\calP f(x):= \frac{1}{\card{\Omega_{t}\cap\ZZ^{k}}} \sum_{y \in\Omega_{t}\cap\ZZ^{k}} f(x-\calP(y)),\quad x\in\ZZ^d.
\end{align}
\indent In the case of seminorms we will follow the notation from \cite{MSS} and \cite{MSZ3}. Let $\II\subseteq\RR$. For a given increasing sequence $I=(I_j: j\in\NN)\subseteq\II$, the {\it truncated oscillation seminorm} is defined for any function $f\colon \II \to \CC$ by setting
\begin{align}
\label{eq:45}
O^2_{I,N}(f):=O_{I, N}^2( f(t): t\in \II)
:= \Big(\sum_{j=1}^N\sup_{\substack{I_j \le t < I_{j+1}\\t\in\II}}
\abs{f(t)-f(I_j)}^2\Big)^{1/2}
\quad \text{for all} \quad N\in\NN\cup\{\infty\}.
\end{align}
A closely related concept to the oscillation seminorm is the {\it $r$-variation seminorm}. Recall that for any $r\geq0$ the $r$-variation seminorm $V^r$ of a function $f\colon\II \to \CC$ is defined by
\begin{align}
\label{eq:def:Vr}
V^{r}(f)
:=V^{r}(f(t) : t\in\II):=
\sup_{J\in\N} \sup_{\substack{t_{0}<\cdots<t_{J}\\ t_{j}\in\II}}\Big(\sum_{j=0}^{J-1}  \abs{f(t_{j+1})-f(t_{j})}^{r} \Big)^{1/r}, 
\end{align}
where the latter supremum is taken over all finite increasing sequences in $\II$. In the case of $r=\infty$ we have an obvious modification. Finally, for any $\lambda>0$ and $\II \subseteq \RR$, the \emph{$\lambda$-jump counting function} of a function $f\colon\II \to \CC$ is defined by
\begin{align}
  \label{eq:def:jump}
N_{\lambda}(f) :=
N_{\lambda}(f(t) : t\in\II):=\sup \{J\in\NN\, |\, \exists_{\substack{t_{0}<\dotsb<t_{J}\\ t_{j}\in\II}}  : \min_{0<j\leq J} \abs{f(t_{j})-f(t_{j-1})} \geq \lambda\}.
\end{align}
The aim of this article is to give new proofs of seminorm inequalities from \cite{MSS}, \cite{MST2} and \cite{MSZ3} by using the discrete Littlewood--Paley theory (see \cite[Theorem 3.3]{M10}) and the bootstrap arguments similar in spirit to these from \cite{DR86}. This approach is much in the spirit of the methods used in the continuous setting in which we usually use Littlewood--Paley operators, see \cite{MSZ2}. It is also more ``elementary'' since it does not require any external knowledge like vector-valued estimates from \cite{MST1} or the estimates for appropriate square functions like in \cite{MSZ3}. Our main result is to give a bootstrapping proof of the following theorem.
\begin{theorem}\label{Thm1:bootstrap}
Let $d, k\ge 1$ and let $\calP$ be a polynomial mapping~\eqref{polymap}. Then for any $p\in(1,\infty)$ there is a constant $C_{p,d,k,{\rm deg}\calP}>0$ such that for any $f\in \ell^p(\ZZ^d)$ we have
\begin{align}
    \norm[\big]{\sup_{t>0}|M_t^\calP f|}_{\ell^p(\ZZ^d)}&\le C_{p,d,k,{\rm deg}\calP}\|f\|_{\ell^p(\ZZ^d)},\label{maximalest}\\
    \sup_{\lambda>0}\norm[\big]{\lambda N_{\lambda}(M_t^\calP f:t\in\RR_+)^{1/2}}_{\ell^p(\ZZ^d)}&\le C_{p,d,k,{\rm deg}\calP}\|f\|_{\ell^p(\ZZ^d)},\label{jumpest}\\
    \sup_{N\in\NN}\sup_{I\in\mathfrak{S}_N(\RR_+)}\norm[\big]{O_{I,N}^2(M_t^\calP f:t\in\RR_+)}_{\ell^p(\ZZ^d)}&\le C_{p,d,k,{\rm deg}\calP}\norm{f}_{\ell^p(\ZZ^d)},\label{oscillest}
\end{align}
where $\mathfrak{S}_N(\RR_+)$ is the set of all strictly increasing sequences of length $N+1$ contained in $\RR_+$ (see Section~\ref{sec:notation}). Moreover, for any $r\in(2,\infty)$ there is a constant $C_{p,d,k,r,{\rm deg}\calP }>0$ such that
\begin{equation}
    \norm[\big]{V^r(M_t^\calP f:t\in\RR_+)}_{\ell^p(\ZZ^d)}\le C_{p,d,k,r,{\rm deg}\calP }\|f\|_{\ell^p(\ZZ^d)},\quad f\in\ell^p(\ZZ^d).\label{varest}
\end{equation}
In particular, the constants mentioned above are independent of the
coefficients of the polynomial mapping $\calP$.
\end{theorem}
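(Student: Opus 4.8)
The strategy is to deduce the four estimates \eqref{maximalest}--\eqref{varest} from two model inequalities --- a \emph{long} (dyadic) jump estimate and a \emph{short} variation estimate --- and to prove the long estimate by combining the discrete Littlewood--Paley decomposition \cite[Theorem 3.3]{M10} with the Hardy--Littlewood circle method, using a Duoandikoetxea--Rubio de Francia type iteration \cite{DR86} to upgrade $\ell^2$ information to all exponents $p\in(1,\infty)$ without invoking square functions of the family $(M_t^\calP)_{t>0}$ or vector-valued maximal estimates.

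First I would invoke the standard comparison inequalities between the seminorms $O^2$, $V^r$ and $N_\lambda$ to reduce matters, for each $p\in(1,\infty)$, to: \textbf{(i)} the dyadic jump estimate $\sup_{\lambda>0}\|\lambda N_\lambda(M_{2^n}^\calP f:n\in\ZZ)^{1/2}\|_{\ell^p(\ZZ^d)}\lesssim\|f\|_{\ell^p(\ZZ^d)}$, and \textbf{(ii)} the short variation estimate $\|(\sum_{n\in\ZZ}V^2(M_t^\calP f:t\in[2^n,2^{n+1}))^2)^{1/2}\|_{\ell^p(\ZZ^d)}\lesssim\|f\|_{\ell^p(\ZZ^d)}$. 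Indeed, \textbf{(i)} yields the dyadic versions of \eqref{maximalest}, \eqref{oscillest} and \eqref{varest} (the last through the Lépingle-type bound $V^r\lesssim_{r}\sup_{\lambda>0}\lambda N_\lambda(\cdot)^{1/2}$ for $r>2$), and adding \textbf{(ii)} promotes each of them to the supremum over all $t\in\RR_+$. Estimate \textbf{(ii)} is essentially a single-scale statement: on a block $[2^n,2^{n+1})$ the map $t\mapsto M_t^\calP f(x)$ is piecewise constant with jumps controlled by the lattice points crossing $\partial\Omega_t$, and after one pass of the circle method $M_t^\calP$ on such a block is comparable to its continuous counterpart, for which the short $2$-variation is classical; the resulting error terms are of the same type as --- and strictly simpler than --- those appearing in \textbf{(i)}.

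The core is \textbf{(i)}. I would fix a dyadic Littlewood--Paley resolution $(\Delta_j)_{j\ge0}$ on $\TT^d$ and expand $M_{2^n}^\calP$ by the circle method as $M_{2^n}^\calP=\sum_{s\ge0}L_n^s$, where $L_n^0$ is the main term whose multiplier is an anisotropically dilated, sampled copy of the symbol of the \emph{continuous} Radon average localized to a small frequency neighbourhood of the origin, and for $s\ge1$ each $L_n^s$ collects the contributions of rationals with denominator of dyadic size $\sim 2^s$ (together with the intervening minor-arc pieces), so that Weyl's inequality and the Gauss-sum bounds --- both uniform for polynomial mappings of a fixed degree --- give $\|L_n^s\|_{\ell^2\to\ell^2}\lesssim 2^{-\delta s}$ while an Ionescu--Wainger type projection argument keeps $\|L_n^s\|_{\ell^p\to\ell^p}\lesssim_p 2^{\varepsilon s}$. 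For $L_n^0$ I would transfer the seminorm inequalities for the continuous Radon average --- which are known, are themselves susceptible to the same bootstrap, and which after rescaling depend only on $p$ and $\deg\calP$, never on the coefficients of $\calP$ --- to $\ell^p(\ZZ^d)$ via the sampling principle, obtaining \textbf{(i)} for the main term. For $s\ge1$ I would interpolate the $\ell^2$ gain against the $\ell^p$ bound to get an $\ell^p$ gain $2^{-\delta_p s}$; then, using that $\Delta_j L_n^s$ is non-negligible only when $2^{-j}$ sits near the frequency scale dictated by $n$ and $s$, a Duoandikoetxea--Rubio de Francia style summation in $j$ --- controlled by the Littlewood--Paley square function of $f$ from \cite[Theorem 3.3]{M10} --- produces the dyadic jump bound for $L_n^s$ with the same gain, and summing over $s$ closes \textbf{(i)}.

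The chief obstacle is the jump seminorm itself: unlike $O^2$ and $V^r$ it is not subadditive, so the decomposition $M_{2^n}^\calP=\sum_s L_n^s$ cannot be fed through $\lambda N_\lambda(\cdot)^{1/2}$ term by term. I would handle this with the device from \cite{MSZ3} of dominating $\lambda N_\lambda(\cdot)^{1/2}$ by the sum of a genuinely subadditive $r$-variational ($r>2$) ``long-jump'' part and an ``$\ell^2$-type'' part that decouples the two summation parameters $n$ and $s$; the remaining difficulty, which is exactly what the bootstrap is designed to tame, is the book-keeping needed to carry the constants through the double summation in $n$ and $s$, through the interpolation that manufactures $2^{-\delta_p s}$, and through the sampling transference, while keeping them independent of the coefficients of $\calP$.
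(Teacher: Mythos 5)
Your overall architecture matches the paper's: reduction to canonical mappings, the split into long jumps over the dyadic scales plus short $2$-variations within dyadic blocks, Ionescu--Wainger projections to sort rationals by denominator size, Weyl/Gauss-sum decay on $\ell^2$, the discrete Littlewood--Paley theory of \cite{M10}, and a Duoandikoetxea--Rubio de Francia style bootstrap to upgrade to $\ell^p$ without vector-valued inputs. The mechanism you are gesturing at --- introducing a cut-off constant $R_p(N)$, converting the seminorm to a square function by Rademacher--Menshov, and applying Lemma~\ref{lemma28} with the observation that $\|B_{\ast,\JJ}\|_{\ell^q\to\ell^q}$ is itself controlled by $R_q(N)$ so that one closes with $R_p(N)\lesssim 1+R_p(N)^{\beta}$ for some $\beta<1$ (going through $R_{p'}(N)$ by duality when $p>2$) --- is exactly the paper's engine, and you would need to spell out this self-improvement inequality explicitly, but your reference to \cite{DR86} shows you see what is required.

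There is, however, one genuine error in the reduction step. You assert that the dyadic jump estimate \textbf{(i)} ``yields the dyadic versions of \eqref{maximalest}, \eqref{oscillest} and \eqref{varest}.'' It does yield \eqref{varest} (for $r>2$, weakly, then strongly after interpolation in $r$) and \eqref{maximalest}, but it does \emph{not} yield \eqref{oscillest}. The only available comparison in that direction is $O^2_{I,J}\le J^{1/2-1/r}V^r$, whose constant blows up as $J\to\infty$, so neither the jump inequality nor any $V^r$ bound with $r>2$ implies the uniform (in $N$ and $I$) oscillation inequality; the oscillation seminorm sits strictly between $V^r$ for $r>2$ and the unbounded $V^2$. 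The paper makes this explicit in the remarks after Theorem~\ref{Thm1:bootstrap} (``Only the oscillation inequality \eqref{oscillest} is not implied by the former ones''). Consequently you must run the same bootstrap \emph{directly} for the oscillation seminorm; the key additional ingredient is the inequality \eqref{domsup2}, which lets you control the maximal-operator norm $\|B_{\ast,\JJ}\|_{\ell^q\to\ell^q}$ appearing in Lemma~\ref{lemma28} by the oscillation cut-off constant $R_q(N)$, exactly as \eqref{domsup1} does for $V^r$ and as the weak-type bound \eqref{domweak} combined with Marcinkiewicz interpolation does for the jump seminorm. As written, your plan simply drops \eqref{oscillest}.

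Two smaller remarks. First, the jump quasi-seminorm is in fact subadditive up to constant: by \cite[Corollary 2.2]{MSZ1} it is equivalent to a genuine subadditive seminorm, so the workaround you describe (dominating by a separate $V^r$ part plus an $\ell^2$ part) is unnecessary --- the real issue for the jump case is the lack of a pointwise domination like \eqref{domsup1}, and the fix is the weak-type estimate \eqref{domweak}. Second, the short-variation estimate \textbf{(ii)} is not merely a corollary of the single-scale comparison with the continuous operator: the paper runs a second, parallel bootstrap for it, with its own cut-off constant $R_p(L)$, Rademacher--Menshov step, and application of Lemma~\ref{lemma28}; calling it ``strictly simpler'' than \textbf{(i)} understates the work involved, though the ideas are indeed the same.
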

A few comments are in order. The maximal inequality \eqref{maximalest} was proven in \cite{MT1} and it is a generalisation of Bourgain's result from \cite{B3}. The jump inequality \eqref{jumpest} was proven in \cite{MSZ3}. The oscillation inequality \eqref{oscillest} was recently proven in \cite{MSS} and the first proof of the $r$-variation estimate \eqref{varest} in a full range $r>2$ was given in \cite{MST2} (see also \cite{zk} for previous results). It is worth to note that the inequality \eqref{jumpest} implies $r$-variation estimates for $r\in(2,\infty)$, and each of these, in turn, implies the maximal inequality \eqref{maximalest} -- for details see Section~\ref{sec:seminorms}. Only the oscillation inequality \eqref{oscillest} is not implied by the former ones. Hence, one could just prove inequalities \eqref{jumpest} and \eqref{oscillest} and conclude that the other inequalities are implied by them. However, in this paper we would like to show that the Littlewood--Paley theory and the bootstrap approach can be used in the case of all mentioned seminorms. Fortunately, it does not greatly affect the length of the article.
\subsection{Historical background}
\indent The oscillation seminorm \eqref{eq:45} was introduced in Bourgain's articles \cite{B1,B2,B3} to prove that for any dynamical system $(X,\calB,\mu,T)$ with a measure preserving transformation $T$ the averages along the squares
\begin{equation*}
    T_N f(x):=\frac{1}{N}\sum_{n=1}^N{f(T^{n^2}x)}
\end{equation*}
converge $\mu$-almost everywhere as $N$ goes to infinity for all $f\in L^p(X,\mu)$ with $p\in(1,\infty)$. The usual approach to prove pointwise convergence requires $L^p$ boundedness for the corresponding maximal function and pointwise convergence on a dense class of functions in $L^p(X,\mu)$. In the case of the operator $T_N$ it is not easy to find an appropriate dense class for which the pointwise convergence is {\it a priori} known. Fortunately, Bourgain managed to bypass that problem by using the oscillation seminorm. He proved that for a lacunary sequence $I=(I_j: j\in\NN)$ and 
for any $J\in\NN$ one has
\begin{equation}\label{eq:int:1}
    \norm[\big]{O_{I,J}^2(T_N f:N\in\NN)}_{L^2(X)}\lesssim_I J^{c}\norm{f}_{L^2(X)}
\end{equation}
for some a constant $c<1/2$. From the above inequality one may deduce that $T_N f$ converges pointwise for $f\in L^2(X,\mu)$. Later, many authors followed Bourgain's approach and studied oscillation inequalities in various contexts, see \cite{CJRW1,jkrw,MSS}.\\
\indent To prove \eqref{eq:int:1} Bourgain used $r$-variation seminorms $V^r$, a well known object from the martingale theory. The $r$-variations for a family of bounded martingales
$(\mathfrak f_n\colon X\to \CC:n\in\NN)$ were studied by L{\'e}pingle
\cite{Lep} who showed that for all $r\in(2, \infty)$ and
$p\in(1, \infty)$ there is a constant $C_{p, r}>0$ such that the following inequality
\begin{align}
\label{eq:150}
\norm{V^r(\mathfrak f_n: n\in\NN)}_{L^p(X)}
\le C_{p, r}\sup_{n\in\NN}\norm{\mathfrak f_n}_{L^p(X)}
\end{align}
holds with sharp ranges of exponents, see also \cite{JG} for a counterexample at $r=2$. We also refer to \cite{B3,MSZ1,PX} for generalisations and different proofs of \eqref{eq:150}. The inequality \eqref{eq:150} is an extension of Doob's maximal inequality for martingales and is a quantitative form of the martingale convergence theorem. The $r$-variation is closely related to the oscillation seminorm since by H\"older's inequality, for $r\ge2$, we have 
\begin{equation}\label{eq:int:2}
    O_{I,J}^2(T_N f:N\in\NN)\le J^{1/2-1/r}V^r(T_N f:N\in\NN).
\end{equation}
Bourgain approximated the operator $T_N$ by a suitable martingale and then used L{\'e}pingle's variational inequality to show that for $r>2$
\begin{equation*}
    \|V^r(T_N f:N\in\NN)\|_{L^p(X)}\lesssim\|f\|_{L^p(X)}.
\end{equation*}
Having this, one easily obtains \eqref{eq:int:1} using the inequality \eqref{eq:int:2}.

In \cite{B3} Bourgain also considered the $\lambda$-jump counting function which turns out to be even more fundamental object than the $r$-variation $V^r$. It is not hard to obtain that for every $r\ge1$ one has 
\begin{align}
\label{eq:17}
\sup_{\lambda>0}\|\lambda N_{\lambda}(T_N f:N\in\NN)^{1/r}\|_{L^p(X)}\le \|V^r(T_N f:N\in\NN)\|_{L^p(X)}.
\end{align}
The remarkable feature of the $\lambda$-jumps, observed by Bourgain \cite{B3}, is that, in some sense, the inequality \eqref{eq:17} can be reversed. Namely, {\it a priori} uniform $\lambda$-jump estimates 
\begin{align}
\label{eq:18}
\sup_{\lambda>0}\|\lambda N_{\lambda}(T_N f:N\in\NN)^{1/2}\|_{L^p(X)}
\le C_{p}\|f\|_{L^p(X)},
\end{align}
for some $p\in[1, \infty)$ imply weak $r$-variational estimates 
\begin{align}
\label{est2}
\|V^r(T_N f:N\in\NN) \|_{L^{p,\infty}(X)}
\le C_{p,r}\|f\|_{L^p(X)},
\end{align}
for the same value of $p$ and for all $r\in(2, \infty]$.
For more details we refer to \cite{jsw} and \cite{MSZ1, MSZ3}, and the references given there.

Bourgain's papers \cite{B1, B2, B3} were the starting point of comprehensive investigations  in  ergodic theory and harmonic analysis, which resulted in many papers, see for instance \cite{AJS,JG,jsw,MST2,MSZ1,MSZ2,MSZ3,MT1}.\\
\indent The discrete Radon averages $M_t^\calP$ arise upon applying Calderón's transference principle \cite{Cald} to the ergodic averages. Let $(X,\mathcal{B},\mu)$ be a $\sigma$-finite measure space with a family of invertible, commuting and measure preserving transformations $T_1, T_2,\ldots, T_d$. For a given polynomial mapping of the form \eqref{polymap} we define the ergodic averages
\begin{equation*}
    A_t^\calP f(x):=\frac{1}{|\Omega_t\cap\ZZ^k|}\sum_{y\in\Omega_t\cap\ZZ^k}f(T_1^{\calP_1(y)}T_2^{\calP_1(y)}\cdots T_d^{\calP_d(y)}x).
\end{equation*}
If one wants to prove, for example, the maximal inequality
\begin{equation*}
    \norm[\big]{\sup_{t>0}|A_t^\calP f|}_{L^p(X)}\lesssim\|f\|_{L^p(X)},
\end{equation*}
then by Calderón's transference principle it is enough to prove the corresponding maximal inequality for appropriate Radon averages $M_t^\calP$ on $\ZZ^d$. 
Nonetheless, the above observation is not the only reason to consider Radon averages. Namely, the operators $M_t^\calP$ can be seen as discrete counterparts of the continuous Radon operators
\begin{equation*}
    \calM_t^\calP f(x):=\frac{1}{|\Omega_t|}\int_{\Omega_t}f(x-\calP(y)){\rm d}y
\end{equation*}
being, in turn, a natural generalisation of the Hardy--Littlewood operators. This direction originates in some problems related to curvatures, see \cite{CNSW,IMSW,SW3}. It is well known (see for instance \cite[Chapter 11]{bigs}) that the maximal operator $\sup_{t>0}|\calM_t^\calP|$ is bounded on $L^p(\RR^d)$ with $p>1$. In 2008, Jones, Seeger and Wright \cite{jsw} managed to prove the jump inequality for the operators $\calM_t^\calP$ over Euclidean balls, see also the appendix in \cite{MST2}. For operators associated with arbitrary convex bodies the jump inequality was proven by Mirek, Stein and Zorin-Kranich \cite{MSZ2} by using bootstrap arguments. Finally, the oscillation inequality was recently proven in \cite{MSS} by the author in collaboration with Mirek and Szarek.\\
\indent The idea of the bootstrap approach originates in the work of Nagel, Stein and Wainger \cite{NSW} where they studied the problem of differentiation in lacunary directions. Their argument relied heavily on some geometrical considerations. Later, Duoandikoetxea and Rubio de Francia \cite{DR86} used the ideas of bootstrap from  \cite{NSW} (see Lemma~\ref{lemma28} below)  to prove $L^p$ bounds for maximal Radon transform. At the same time, Christ formulated the bootstrap argument from \cite{NSW} in a fairly abstract way, which was used and published by 
Carbery \cite{Car1, Car2}. Finally, Mirek, Stein and Zorin-Kranich \cite{MSZ2} managed to use the bootstrap argument to establish jump inequalities in a very abstract setting.

\indent In order to prove Theorem~\ref{Thm1:bootstrap} we exploit a procedure introduced in \cite{MST2} in the context of $r$-variations. The key ingredient is the vector-valued Ionescu--Wainger multiplier theorem from \cite{MSZ3} and the discrete Littlewood--Paley theory which follows from it (see \cite{M10}). Finally, we use the bootstrap lemma from \cite{MSZ2} which allows us to bound the appropriate square function by the power of the appropriate seminorm.
\section{Preliminaries}
\subsection{Basic notation}\label{sec:notation}
We denote $\NN:=\{1, 2, \ldots\}$, $\NN_0:=\{0,1,2,\ldots\}$ and $\RR_+:=(0, \infty)$. For
$d\in\NN$ the sets $\ZZ^d$, $\RR^d$, $\CC^d$ and $\TT^d\equiv[-1/2, 1/2)^d$ have the usual meaning. For every $N\in\NN$ we  define
    \[
    \NN_N:=\{1,\ldots, N\}.
    \]  
For any $x\in\RR$ the floor function is defined by
\[
\lfloor x \rfloor: = \max\{ n \in \ZZ : n \le x \}.
\]
We write $A \lesssim B$ to indicate that $A\le CB$ with a constant $C>0$. The constant $C$ may change from line to line. We write
$\lesssim_{\delta}$ if the implicit constant depends on $\delta$. For two functions $f\colon X\to \CC$ and
 $g\colon X\to [0, \infty)$, we write $f = \mathcal{O}(g)$ if
there is a constant $C>0$ such that $|f(x)| \le C g(x)$ for all
$x\in X$.

For $N\in\NN\cup\{\infty\}$ we denote by $\mathfrak S_N(\II)$ the family of all strictly increasing
sequences of length $N+1$ contained in $\II$.

\subsection{Norms and inner product} 
In the paper we use the standard inner product on $\RR^d$ denoted by $ x\cdot\xi$. Moreover, for any $x\in\RR^d$ we use the the $\ell^2$-norm and the maximum norm defined by
\begin{align*}
\abs{x}:=\abs{x}_2:=\sqrt{\ipr{x}{x}}, \qquad \text{ and } \qquad |x|_{\infty}:=\max_{1\leq k\leq d}|x_k|.
\end{align*}

For any multi-index $\gamma=(\gamma_1,\dots,\gamma_k)\in\N^k_0$, by
abuse of notation we will write
$|\gamma|:=\gamma_1+\cdots+\gamma_k$. This should never cause any confusions
since the multi-indices are always denoted by Greek letter $\gamma$.
\subsection{Fourier transform}
Let $\GG=\RR^d$ or $\GG=\ZZ^d$ and let $\GG^*$ denote the dual group of $\GG$. For every $z\in\CC$ we set $\ex(z):=e^{2\pi {\bm i} z}$, where ${\bm i}^2=-1$. Let $\calF_{\GG}$ denote the Fourier transform on $\GG$ defined for any $f \in L^1(\GG)$ as
\begin{align*}
\calF_{\GG} f(\xi) := \int_{\GG} f(x) \ex(x\cdot\xi) {\rm d}\mu(x),\quad \xi\in\GG^*,
\end{align*}
where $\mu$ is the Haar measure on $\GG$. For any bounded function $\mathfrak m\colon \GG^*\to\CC$ we define the Fourier multiplier operator by 
\begin{align}
\label{eq:2}
T_{\GG}[\mathfrak m]f(x):=\int_{\GG^*}\ex(-\xi\cdot x)\mathfrak m(\xi)\calF_{\GG}f(\xi){\rm d}\xi, \quad \text{ for } \quad x\in\GG.
\end{align}
Here, we assume that $f\colon\GG\to\CC$ is a compactly supported function on $\GG$ (and smooth if $\GG=\RR^d$) or any other function for which \eqref{eq:2} makes sense.
\subsection{Ionescu--Wainger multiplier theorem}\label{sec:IW}
The important tool in dealing with discrete singular integral will be the Ionescu--Wainger multiplier theorem. The following result is the multidimensional vector-valued Ionescu--Wainger multiplier theorem from \cite[Section 2]{MSZ3}.
\begin{theorem}\label{thm:IW-mult}
For every $\varrho>0$, there exists a family $(P_{\leq N})_{N\in\NN}$ of subsets of $\NN$ such that:  
\begin{enumerate}[label*={(\roman*)}]
\item \label{IW1} One has $\NN_N\subseteq P_{\leq N}\subseteq\N_{\max\{N, e^{N^{\varrho}}\}}$.
\item \label{IW2}  If $N_1\le N_2$, then $P_{\leq N_1}\subseteq P_{\leq N_2}$.
\item \label{IW3}  If $q \in P_{\leq N}$, then all factors of $q$ also lie in $P_{\leq N}$.
\item \label{IW4}  One has $\lcm{(P_N)}\le 3^N$.
\end{enumerate}

Furthermore, for every $p \in (1,\infty)$, there exists  $0<C_{p, \varrho, d}<\infty$ such that, for every $N\in\NN$, the following holds.

Let $0<\varepsilon_N \le e^{-N^{2\varrho}}$, and let $\Theta\colon\RR^{d} \to L(H_0,H_1)$ be a measurable function supported on $\varepsilon_{N}\mathbf Q$, where $\mathbf Q:=[-1/2, 1/2)^d$ is a unit cube, with values in the space $L(H_{0},H_{1})$ of bounded linear operators between separable Hilbert spaces $H_{0}$ and $H_{1}$.
Let $0 \leq \mathbf A_{p} \leq \infty$ denote the smallest constant such that, for every function $f\in L^2(\RR^d;H_0)\cap L^{p}(\RR^d;H_0)$, we have
\begin{align}
\label{eq:75}
\norm[\big]{T_{\RR^d}[\Theta]f}_{L^{p}(\RR^{d};H_1)}
\leq
\mathbf A_{p} \norm{f}_{L^{p}(\RR^{d};H_0)}.
\end{align}
Then the multiplier
\begin{equation}
\label{eq:IW-mult}
\Delta_N(\xi)
:=\sum_{b \in\Sigma_{\leq N}}
\Theta(\xi - b),
\end{equation}
where $\Sigma_{\leq N}$ is $1$-periodic subsets of $\TT^d$ defined by
\begin{align}
\label{eq:42}
\Sigma_{\leq N} := \Big\{ \frac{a}{q}\in\QQ^d\cap\TT^d:  q \in P_{\leq N} \text{ and } {\rm gcd}(a, q)=1\Big\},
\end{align}
satisfies for every $f\in L^p(\ZZ^d;H_0)$ the following inequality
\begin{align}
\label{eq:76}
\norm[\big]{ T_{\ZZ^d}[\Delta_{N}]f}_{\ell^p(\ZZ^{d};H_1)}
\le C_{p,\varrho,d}
(\log N) \mathbf A_{p}
\norm{f}_{\ell^p(\ZZ^{d};H_0)}.
\end{align}
\end{theorem}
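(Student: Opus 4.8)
The statement to prove is Theorem~\ref{thm:IW-mult} itself, the vector-valued Ionescu--Wainger multiplier theorem. Since the paper explicitly attributes this result to \cite[Section~2]{MSZ3}, the honest approach is to reconstruct the Ionescu--Wainger machinery in the vector-valued setting, following the strategy of Mirek--Stein--Zorin-Kranich. The plan has two parts: (1) construct the admissible family $(P_{\le N})_{N\in\NN}$ satisfying \ref{IW1}--\ref{IW4} by an arithmetic argument, and (2) prove the multiplier bound \eqref{eq:76} by decomposing $\Delta_N$ along the denominators $q\in P_{\le N}$ and exploiting the separation of the centers $a/q$ together with the transference from the hypothesis \eqref{eq:75}.

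First I would construct the sets $P_{\le N}$. Fix the parameter $\varrho>0$. Choose $N$ large and let $D:=\lceil 1/\varrho \rceil$ (or similar). Partition the ``small'' primes $p\le N^{C}$ into $O(\log N)$ dyadic blocks, and for each choice of at most $D$ blocks form products of prime powers bounded appropriately; taking $P_{\le N}$ to be the set of all such products, together with $\NN_N$ and closed under taking divisors, one checks \ref{IW3} by construction, \ref{IW2} by monotonicity of the construction in $N$, \ref{IW1} since every element is a product of at most $D$ prime powers each at most $e^{N^{\varrho}/D}$ (so the whole product is at most $e^{N^{\varrho}}$) while $\NN_N$ is included by fiat, and \ref{IW4} since $\lcm$ of the chosen family is controlled by $\prod_{p\le N^{C}} p^{\lfloor \log_p(\cdot)\rfloor}$, which one bounds by $3^N$ for $N$ large by the prime number theorem (Chebyshev's estimate $\sum_{p\le x}\log p = O(x)$); for the finitely many small $N$ one adjusts by hand. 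This part is essentially combinatorial number theory and I do not expect it to be the crux.

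The analytic heart is \eqref{eq:76}. I would decompose $\Sigma_{\le N} = \bigsqcup_{j} S_j$ according to which ``block'' of denominators is used, writing $\Delta_N = \sum_j \Delta_N^{(j)}$ with $\Delta_N^{(j)}(\xi)=\sum_{b\in S_j}\Theta(\xi-b)$. For a single scale one uses that the fractions $a/q$ with $q$ in a fixed range are $\gtrsim (\text{range})^{-2}$-separated on $\TT^d$, while $\Theta$ is supported on a cube of side $\varepsilon_N \le e^{-N^{2\varrho}}$, which is far smaller than that separation; hence the translates $\Theta(\cdot - b)$ have disjoint supports and the $\ell^p(\ZZ^d)\to\ell^p(\ZZ^d)$ norm of $T_{\ZZ^d}[\Delta_N^{(j)}]$ is controlled, via a sampling/transference argument (periodizing $\Theta$ and comparing the $\ZZ^d$-multiplier to the $\RR^d$-multiplier $\Theta$), by $\mathbf A_p$ times a Gauss-sum type factor. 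The classical Ionescu--Wainger input is that these Gauss sums over $q$ contribute only $O(q^{-\delta})$ in an $L^p$-averaged sense (for $p$ away from $\infty$ one interpolates the trivial $\ell^2$ bound with an $\ell^p$ bound obtained from a clever use of the multiplicative structure of $\Sigma_{\le N}$ as a ``product set'' of coprime denominators), so that summing over the $O(\log N)$ blocks costs only the factor $\log N$ in \eqref{eq:76}. The vector-valued upgrade is routine: the whole argument is linear in $\Theta$ and only uses $\ell^p$-duality and the Fubini-type interchange of the $H_0\to H_1$ operator norm with integration, so replacing scalars by $L(H_0,H_1)$ changes nothing in the estimates, with $\mathbf A_p$ now the operator-valued constant from \eqref{eq:75}.

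\textbf{Main obstacle.} The hard part is the $\ell^p$-estimate for a single block for $p\ne 2$: the disjoint-support argument gives the $\ell^2$ bound for free, but promoting it to $\ell^p$ with a constant independent of $N$ (so that only $\log N$ blocks need to be summed) requires the Ionescu--Wainger factorization of $\Sigma_{\le N}$ into a bounded number of ``coprime'' pieces and the associated Rademacher/interpolation argument controlling the resulting vector of Fourier projections. This is precisely the technical core of \cite{MSZ3}; I would reproduce it carefully, taking care that every constant is tracked as $C_{p,\varrho,d}$ and that the only $N$-dependence left is the advertised $\log N$.
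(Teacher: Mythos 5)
First, a point of order: the paper does not prove this theorem. It is imported verbatim from \cite[Section 2]{MSZ3}, and the text explicitly points the reader there for the proof. So there is no internal argument to compare yours against; what can be judged is whether your reconstruction of the MSZ3 argument is sound.

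Your outline has the right architecture for the construction of $P_{\le N}$ (products of a highly divisible ``small-prime'' part with at most $\lceil 1/\varrho\rceil$ large prime powers, closure under divisors, $\lcm$ controlled by Chebyshev because every prime-power factor of every element is $\le N$ --- note this last point is what you actually need, not a bound on the primes themselves, since elements of $P_{\le N}$ can be as large as $e^{N^{\varrho}}$). But the analytic half contains a genuine misstep and a genuine gap. The misstep: Gauss sums play no role whatsoever in Theorem~\ref{thm:IW-mult}. The multiplier $\Delta_N$ carries no arithmetic weights; Gauss sums $G(a/q)$ and their decay $O(q^{-\delta})$ enter only later in the paper, when $m_{2^n}$ is approximated on major arcs via \eqref{reduction}. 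The difficulty in \eqref{eq:76} is entirely different: $\Sigma_{\le N}$ contains $e^{cN^{\varrho}}$ fractions (cf.\ \eqref{eq:43}), so a triangle-inequality or per-fraction bound is hopeless, and the bound must be uniform in that count. The mechanism that achieves this is the super-orthogonality coming from the multiplicative (coprime product) structure of $P_{\le N}$, combined with a square-function/Rademacher--Menshov argument and Magyar--Stein--Wainger transference; the $\log N$ loss is produced by that argument, not by summing $O(\log N)$ blocks each with a uniform bound. You do name this core in your final paragraph, but you defer it entirely (``I would reproduce it carefully''), which means the actual content of the theorem --- the only part that is not routine --- is not carried out. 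As a blind proof the proposal therefore stands as a correct table of contents for the MSZ3 proof with its central chapter missing, and with one structural element (Gauss-sum decay) attributed to the wrong theorem.
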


An important feature of  Theorem~\ref{thm:IW-mult} is that one can
directly transfer square function estimates from the continuous to the
discrete setting. By property \ref{IW1} we see that
\begin{align}
\label{eq:43}
|\Sigma_{\leq N}|\lesssim e^{(d+1)N^\varrho}.
\end{align}
 A detailed proof of Theorem~\ref{thm:IW-mult} (in the spirit of
\cite{M10}) can be found in \cite[Section 2]{MSZ3}. It turns out that factor $\log N$ may be
removed from \eqref{eq:76}, see \cite{TaoIW}.
\subsection{Lifting procedure}\label{sec:lift}
By a standard lifting argument it suffices to
prove Theorem~\ref{Thm1:bootstrap} only for canonical polynomial mappings.
Let $\mathcal{P}$ be a polynomial mapping (\ref{polymap}). We define 
\begin{equation*}
    {\rm deg}\, \mathcal{P}:=\max\{{\rm deg}\, \mathcal{P}_j: 1\le j\le d\}.
\end{equation*}
Let us consider the set of multi-indexes 
\begin{equation*}
    \Gamma:=\big\{\gamma\in\N_0^k\setminus\{0\}: 0<|\gamma|\le {\rm deg}\, \mathcal{P}\big\}
\end{equation*}
equipped with the lexicographic order. By $\RR^\Gamma$ we mean the space of tuples of real numbers labeled by multi-indices $\gamma=(\gamma_1,\dots,\gamma_k)$. It can be easily noted that $\RR^\Gamma\cong\RR^{|\Gamma|}$ and $\ZZ^\Gamma\cong\ZZ^{|\Gamma|}$. Recall that the canonical polynomial mapping is defined by
\begin{equation*}
    \RR^k\ni x=(x_1,\dots,x_k)\mapsto(x)^\Gamma:=(x^\gamma\colon\gamma\in\Gamma)\in\RR^\Gamma,
\end{equation*} 
where $x^\gamma=x_1^{\gamma_1}x_2^{\gamma_2}\cdots x_k^{\gamma_k}$. In the case of the averaging Radon operator associated with with the canonical mapping we will write
\begin{equation*}
    M_t f(x):= \frac{1}{|\Omega_t\cap\ZZ^k|}\sum_{y \in\Omega_{t}\cap\ZZ^{k}} f(x-(y)^\Gamma).
\end{equation*}
By invoking the lifting procedure for the Radon averages described in \cite[Lemma 2.2]{MST1} (see also \cite[Section 11]{bigs}) it is enough to prove Theorem~\ref{Thm1:bootstrap} only for the canonical mappings.
\subsection{Properties of the considered seminorms}\label{sec:seminorms}
The proof of Theorem~\ref{Thm1:bootstrap} will be conducted simultaneously for all considered (quasi-)seminorms, pointing out the places where the distinction should be made. Due to  technical reasons we will consider the following seminorm 
\begin{equation}\label{supnew}
    \sup_{t>0}|M_t f- M_0f|
\end{equation}
instead of the usual maximal function. We could prove Theorem~\ref{Thm1:bootstrap} for the usual maximal function but it would require a few minor changes which would affect the length of the article. The seminorm \eqref{supnew} is more consistent with the rest of the seminorms and allows us to write results in a more elegant way.

Now let $\II\subseteq\RR$. For a given family of measurable
functions $(\mathfrak{a}_t:t\in\II)\subset\ell^p(\ZZ^\Gamma)$ we consider the following seminorms: the supremum seminorm
\begin{equation}
     \norm[\big]{\sup_{t\in\II}|\mathfrak a_t-\mathfrak a_{\inf \II}|}_{\ell^p(\ZZ^\Gamma)},\label{supnorm}
\end{equation}
the oscillation seminorm
\begin{equation}
    \sup_{N\in\NN}\sup_{I\in\mathfrak S_N(\II)}\norm[\big]{O_{I,N}^2(\mathfrak a_t:t\in\II)}_{\ell^p(\ZZ^\Gamma)},\label{oscillnorm}
\end{equation}
the jump quasi-seminorm 
\begin{equation}
     \sup_{\lambda}\norm[\big]{\lambda N_\lambda(\mathfrak a_t:t\in\II)^{1/2}}_{\ell^p(\ZZ^\Gamma)}\label{jumpnorm},
\end{equation}
and, for $r\in[2,\infty)$, the $r$-variation seminorm
\begin{equation}
    \norm[\big]{V^r(\mathfrak{a}_t:t\in\II)}_{\ell^p(\ZZ^\Gamma)}\label{varnorm}.
\end{equation}
 For clarity, for all of the above seminorms we will use one common symbol $$\calS_p(\mathfrak{a}_t:t\in\II).$$

\indent Below we state some basic proprieties of the seminorms \eqref{supnorm}--\eqref{varnorm}. For any $r\in[2,\infty]$ we have the following pointwise estimate for the maximal function
\begin{equation}\label{domsup1}
    \sup_{t\in\II}|\mathfrak{a}_t|\leq V^r(\mathfrak{a}_t)+|\mathfrak{a}_{t_0}|.
\end{equation}
A similar result holds in the case of the oscillation seminorm
\begin{equation}\label{domsup2}
    \norm{\sup_{t\in\II\setminus\sup\II}|\mathfrak{a}_t|}_{\ell^p(\ZZ^\Gamma)}\leq\sup_{t\in\II}\norm{\mathfrak{a}_t}_{\ell^p(\ZZ^\Gamma)}+\sup_{N\in\NN}\sup_{I\in\mathfrak{S}_N(\II)}\norm[\big]{O_{I,N}^2(\mathfrak{a}_t:t\in\II)}_{\ell^p(\ZZ^\Gamma)}.
\end{equation}
The jump quasi-seminorm \eqref{jumpnorm}, in turn, provides the weak type estimate for the $r$-variation seminorm (see \cite[Lemma 2.5]{MSZ1}), that is, for any $r>2$ one has
\begin{equation}\label{domweak}
\big\| V^{r}\big( \mathfrak{a}_t : t \in \II \big) \big\|_{\ell^{p,\infty} (\ZZ^\Gamma)}
\lesssim_{p,r}
\sup_{\lambda>0} \big\| \lambda N_{\lambda}(\mathfrak{a}_t: t\in \II)^{1/2} \big\|_{\ell^{p}(\ZZ^\Gamma)}.
\end{equation}
Let us note that for any countable family $(a_n:n\in\JJ)\subset\ell^p(\ZZ^\Gamma)$ the following inequality holds
\begin{equation}\label{square}
    \calS_p(a_n:n\in \JJ)\le2\norm[\big]{\big(\sum_{n\in \JJ}|\mathfrak{a}_n|^2\big)^{1/2}}_{\ell^p(\ZZ^\Gamma)}.
\end{equation}
\begin{proposition}
Let $p\in(1,\infty)$ and $\II\subseteq\RR$. Then the seminorm $\calS_p$ is subadditive up to some positive constant, that is,
\begin{equation*}
    \calS_p(\mathfrak{a}_t+\mathfrak{b}_t:t\in\II)\lesssim  \calS_p(\mathfrak{a}_t:t\in\II)+ \calS_p(\mathfrak{b}_t:t\in\II),
\end{equation*}
where the implied constant is independent of the set $\II$, and the families $(\mathfrak{a}_t:t\in\II)$ and $(\mathfrak{b}_t:t\in\II)$. Moreover, for $-\infty\le u<w<v\le\infty$, we have
\begin{align}
    \calS_p(\mathfrak{a}_t:t\in [u,v])\lesssim \calS_p(\mathfrak{a}_t\in[u,w+1])+\calS_p(\mathfrak{a}_t:t\in[w,v]),\label{eq:split1}\\
    \calS_p(\mathfrak{a}_t\mathds{1}_{(u,\infty)}(t):t\in[0,w])\lesssim\calS_p(\mathfrak{a}_t:t\in[u,w])+\|\mathfrak a_u\|_{\ell^p(\ZZ^\Gamma)},\label{eq:split2}
\end{align}
where implied constants depend only on the seminorm $\calS_p$.
\end{proposition}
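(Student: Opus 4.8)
The plan is to treat the three asserted inequalities one at a time, but in each case to reduce everything to the definitions \eqref{supnorm}--\eqref{varnorm} together with two elementary splitting principles for each seminorm type: (a) if a family is written as a pointwise difference, its seminorm is controlled by the sum of the two seminorms, and (b) if the index set $\II$ is broken into overlapping pieces, the seminorm over $\II$ is controlled by the sum of the seminorms over the pieces. Both (a) and (b) are completely standard for the supremum, the $r$-variation, the $\lambda$-jump count, and the oscillation seminorm, but since the constants and the precise overlap conventions differ slightly among them, I would verify them case by case, using the common symbol $\calS_p$ only to state the conclusions uniformly.

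For subadditivity: for the supremum and $V^r$ this is immediate from the triangle inequality inside the defining sups (for $V^r$ one uses $|f(t_{j+1})+g(t_{j+1})-f(t_j)-g(t_j)|^r \lesssim_r |f(t_{j+1})-f(t_j)|^r + |g(t_{j+1})-g(t_j)|^r$ termwise, then takes $r$-th roots); for the oscillation seminorm one argues identically, bounding each inner supremum of $|\mathfrak a_t+\mathfrak b_t - \mathfrak a_{I_j} - \mathfrak b_{I_j}|$ by the sum of the two inner suprema and using $\ell^2$-subadditivity of the outer sum; for the jump quasi-seminorm one notes that a jump of size $\ge\lambda$ for $\mathfrak a+\mathfrak b$ forces a jump of size $\ge\lambda/2$ for at least one of $\mathfrak a,\mathfrak b$ on the same interval, hence $N_\lambda(\mathfrak a+\mathfrak b) \le N_{\lambda/2}(\mathfrak a)+N_{\lambda/2}(\mathfrak b)$ pointwise, and then $\lambda N_\lambda(\mathfrak a+\mathfrak b)^{1/2} \le 2\big((\lambda/2)N_{\lambda/2}(\mathfrak a)^{1/2} + (\lambda/2)N_{\lambda/2}(\mathfrak b)^{1/2}\big)$, after which one takes $\ell^p$ norms and the supremum over $\lambda$. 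Throughout, the implied constants depend only on $p$ and $r$ (hence only on the seminorm), never on $\II$ or on the families.

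For \eqref{eq:split1}: given an increasing configuration $t_0<\dots<t_J$ in $[u,v]$, insert the point $w$ (if it is not already present): the points $\le w$ together with $w$ lie in $[u,w+1]$ (in fact in $[u,w]$), the points $\ge w$ together with $w$ lie in $[w,v]$, and the contribution of the single straddling increment $|\mathfrak a_{t_{i+1}}-\mathfrak a_{t_i}|$ with $t_i<w<t_{i+1}$ is absorbed using $|\mathfrak a_{t_{i+1}}-\mathfrak a_{t_i}|\le |\mathfrak a_{t_{i+1}}-\mathfrak a_w| + |\mathfrak a_w - \mathfrak a_{t_i}|$. For $V^r$ this gives the bound directly after separating the sum into two subsums and using $(A+B)^{1/r}\le A^{1/r}+B^{1/r}$; for the jump count a straddling increment $\ge\lambda$ splits into one of the two halves being $\ge\lambda/2$, giving $N_\lambda([u,v])\le N_{\lambda/2}([u,w])+N_{\lambda/2}([w,v])+1$; for the oscillation seminorm each $I_j$ lies in exactly one of the two subintervals and the corresponding inner supremum is taken over a sub-family of one of them (the extra $+1$ in the right endpoint $w+1$ is there precisely so that an $I_j\le w$ with $I_{j+1}$ possibly just above $w$ still has its full supremum range $[I_j,I_{j+1})$ contained in $[u,w+1]$); for the supremum seminorm, note that after subtracting $\mathfrak a_u$ one has $\sup_{t\in[u,v]}|\mathfrak a_t-\mathfrak a_u| \le \sup_{t\in[u,w+1]}|\mathfrak a_t-\mathfrak a_u| + \sup_{t\in[w,v]}|\mathfrak a_t-\mathfrak a_w| + |\mathfrak a_w - \mathfrak a_u|$ and the last term is already counted in the first sup.

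For \eqref{eq:split2}: the family $(\mathfrak a_t\mathds 1_{(u,\infty)}(t):t\in[0,w])$ is supported on $(u,w]$, where it agrees with $(\mathfrak a_t:t\in[u,w])$, except possibly for a single ``jump at $u$'' from the value $0$ to the value $\mathfrak a_{u^+}$. Concretely I would show that for each of the four seminorms $\calS_p(\mathfrak a_t\mathds 1_{(u,\infty)}(t):t\in[0,w]) \le \calS_p(\mathfrak a_t:t\in[u,w]) + C\|\mathfrak a_u\|_{\ell^p(\ZZ^\Gamma)}$: in any increasing configuration the only increment not already of the form $|\mathfrak a_{t_{i+1}}-\mathfrak a_{t_i}|$ with both in $[u,w]$ is an increment from $0$ to some $\mathfrak a_{t_{i+1}}$ with $t_{i+1}>u$, and $|\mathfrak a_{t_{i+1}}| \le |\mathfrak a_{t_{i+1}} - \mathfrak a_u| + |\mathfrak a_u|$, which reattaches it to the $[u,w]$-configuration at the cost of one extra term bounded by $|\mathfrak a_u|$; summing, taking roots (for $V^r$), or counting (for $N_\lambda$, where again an extra factor $2$ appears from the $\lambda/2$ splitting), and then taking $\ell^p$ norms yields the claim. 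For the oscillation seminorm, the only anomalous $I_j$ is the last one lying in $(-\infty,u]$ with $I_{j+1}>u$, whose inner supremum of $|\mathfrak a_t\mathds 1_{(u,\infty)}(t) - 0|$ over $[I_j,I_{j+1})$ is at most $\sup_{u<t<I_{j+1}}|\mathfrak a_t| \le \sup_{u<t<I_{j+1}}|\mathfrak a_t - \mathfrak a_u| + |\mathfrak a_u|$, and the first piece is absorbed into the oscillation seminorm on $[u,w]$ after enlarging the configuration by the point $u$, while the second contributes $\|\mathfrak a_u\|_{\ell^p(\ZZ^\Gamma)}$.

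The only mildly delicate point — and the one I would be most careful about — is bookkeeping the overlap conventions so that no increment or oscillation block is lost at the splitting point (this is exactly why \eqref{eq:split1} uses the half-open enlargement to $w+1$ and why \eqref{eq:split2} carries the additive $\|\mathfrak a_u\|_{\ell^p}$ term rather than being a clean inclusion); everything else is the triangle inequality, the elementary inequality $(A+B)^{1/r}\le A^{1/r}+B^{1/r}$, and the sub-superadditivity of $\lambda N_\lambda$ under halving of $\lambda$. No vector-valued input is needed here; this is purely a lemma about scalar-valued (respectively $\ell^p$-valued) families.
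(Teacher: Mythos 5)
Your treatment of the supremum, $r$-variation, and oscillation cases, and of the splitting inequalities \eqref{eq:split1}--\eqref{eq:split2}, is correct and fills in details the paper leaves implicit: the paper's own proof is a single sentence, declaring everything routine except subadditivity of the jump quasi-seminorm, for which it invokes \cite[Corollary 2.2]{MSZ1}. It is precisely at that non-routine point that your proposal breaks. The claimed pointwise bound $N_\lambda(\mathfrak a + \mathfrak b) \le N_{\lambda/2}(\mathfrak a) + N_{\lambda/2}(\mathfrak b)$ is false for the jump counting function as defined in \eqref{eq:def:jump}. That definition demands a single increasing chain $t_0<\dots<t_J$ whose \emph{consecutive} increments are all $\ge\lambda$; knowing that on each edge of such a chain one of $\mathfrak a,\mathfrak b$ jumps by $\ge\lambda/2$ does not let you reassemble the $\mathfrak a$-edges into a chain of the required type, because those edges are in general non-consecutive and the intermediate values of $\mathfrak a$ are uncontrolled. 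Concretely, at a single point of $\ZZ^\Gamma$ with six times $t_0<\dots<t_5$, take $\mathfrak a=(0,5,3,8,6,11)$ and $\mathfrak b=(0,3,13,16,26,29)$, so $\mathfrak a+\mathfrak b=(0,8,16,24,32,40)$; with $\lambda=8$ one has $N_8(\mathfrak a+\mathfrak b)=5$, but an enumeration of all increasing chains gives $N_4(\mathfrak a)=N_4(\mathfrak b)=2$, so the proposed right-hand side is $4<5$.

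The paper avoids this issue entirely by citing \cite[Corollary 2.2]{MSZ1}, which shows by real interpolation that $\sup_{\lambda>0}\|\lambda N_\lambda(\cdot)^{1/2}\|_{\ell^p}$ is equivalent to a genuinely subadditive seminorm; that is the intended route and you should cite it. If you insist on a direct combinatorial argument, you can pass through the non-overlapping-interval variant $\tilde N_\lambda(f):=\sup\{K: \exists\, s_1<u_1\le s_2<u_2\le\dots\le s_K<u_K,\ |f(u_i)-f(s_i)|\ge\lambda\text{ for all }i\}$. For $\tilde N_\lambda$ the bound $\tilde N_\lambda(\mathfrak a+\mathfrak b)\le \tilde N_{\lambda/2}(\mathfrak a)+\tilde N_{\lambda/2}(\mathfrak b)$ does hold (the $K$ intervals split by colour into two non-overlapping subfamilies), and a short greedy construction shows $N_\lambda\le \tilde N_\lambda\le N_{\lambda/2}$; combining these gives the correct $N_\lambda(\mathfrak a+\mathfrak b)\le N_{\lambda/4}(\mathfrak a)+N_{\lambda/4}(\mathfrak b)$, after which your calculation survives with $\lambda/4$ in place of $\lambda/2$. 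Either fix is acceptable, but the step as you wrote it is a genuine gap.
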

\begin{proof}
The only difficult part of the proof may be subadditivity of the jump quasi-seminorm. However, by \cite[Corollary 2.2]{MSZ1} we know that it admits an equivalent subadditive seminorm which yields the desired result.
\end{proof}
The next result is a well-known decomposition into the long ``jumps'' and the short variations from \cite{jsw} (see also \cite[Lemma 8.1]{MST2}).
\begin{proposition}[{\cite[Lemma 1.3]{jsw}}]\label{prop:longandshort}
Let $p\in[1,\infty)$ and  $\II\subseteq\RR_+$. Then we have the following decomposition into the long ``jumps'' and short variations
\begin{equation}
    \calS_p(\mathfrak a_t:t\in\II)\lesssim \calS_p(\mathfrak a_{2^n}:n\in\ZZ)+\norm[\Big]{\Big(\sum_{n\in\ZZ}V^2(\mathfrak{a}_t:t\in[2^n,2^{n+1})\cap\II)^2\Big)^{1/2}}_{\ell^p(\ZZ^\Gamma)}.
\end{equation}
\end{proposition}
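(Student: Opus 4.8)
The plan is to split $(\mathfrak{a}_t:t\in\II)$ additively into a \emph{dyadic skeleton} and a \emph{within-block fluctuation} and then invoke the subadditivity of $\calS_p$ from the previous Proposition. For $t\in\II$ set $\kappa(t):=\lfloor\log_2 t\rfloor$, so that $t\in[2^{\kappa(t)},2^{\kappa(t)+1})$, write $\II_n:=[2^n,2^{n+1})\cap\II$, and put $\mathfrak{a}^{(1)}_t:=\mathfrak{a}_{2^{\kappa(t)}}$ and $\mathfrak{a}^{(2)}_t:=\mathfrak{a}_t-\mathfrak{a}_{2^{\kappa(t)}}$. After a routine reduction we may assume $\mathfrak{a}$ is defined on all of $\RR_+$ (extending a family from $\II$ to $\RR_+$ by piecewise-constant interpolation does not increase any of the seminorms $\calS_p$), so that $\mathfrak{a}_{2^n}$ is meaningful for every $n\in\ZZ$. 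By subadditivity it suffices to prove the two bounds
\begin{equation*}
\calS_p\big(\mathfrak{a}^{(1)}_t:t\in\II\big)\lesssim\calS_p\big(\mathfrak{a}_{2^n}:n\in\ZZ\big)
\qquad\text{and}\qquad
\calS_p\big(\mathfrak{a}^{(2)}_t:t\in\II\big)\lesssim\norm[\Big]{\Big(\sum_{n\in\ZZ}V^2\big(\mathfrak{a}_t:t\in\II_n\big)^2\Big)^{1/2}}_{\ell^p(\ZZ^\Gamma)}.
\end{equation*}

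The first bound is essentially immediate: since $\mathfrak{a}^{(1)}$ runs through the values $(\mathfrak{a}_{2^n})_{n\in\ZZ}$ in increasing order of $n$, every strictly increasing selection $t_0<\dots<t_J$ in $\II$ yields, after discarding repetitions, a strictly increasing selection of the $2^n$; hence the $r$-variation, the $\lambda$-jump count and the maximal function of $\mathfrak{a}^{(1)}$ are dominated pointwise by the corresponding quantities for $(\mathfrak{a}_{2^n})_{n\in\ZZ}$. For the oscillation seminorm one additionally observes that an oscillation term of $\mathfrak{a}^{(1)}$ over $[I_j,I_{j+1})$ vanishes when $I_j,I_{j+1}$ lie in the same dyadic block and is otherwise controlled by an oscillation term of $(\mathfrak{a}_{2^n})_{n\in\ZZ}$ over $[\kappa(I_j),\kappa(I_{j+1}))$; this collapsing argument I would carry out separately.

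For the second bound, fix $x\in\ZZ^\Gamma$ and set $v_n:=V^2(\mathfrak{a}_t(x):t\in\II_n)=V^2(\mathfrak{a}^{(2)}_t(x):t\in\II_n)$ (the $2$-variation is insensitive to adding a constant to the function). The mechanism is that $\mathfrak{a}^{(2)}$ vanishes at the left endpoint of each block and ``resets'' from one block to the next, so $|\mathfrak{a}^{(2)}_t|\le v_{\kappa(t)}\le(\sum_n v_n^2)^{1/2}$, while any increment, jump, or oscillation of $\mathfrak{a}^{(2)}$ crossing a dyadic boundary is bounded by $v$ at the two endpoint blocks. For the $r$-variation ($r\ge2$) one splits an increasing selection into increments lying inside a single block and increments crossing a boundary: the former contribute, for each fixed $n$, at most $V^r(\mathfrak{a}^{(2)}_t:t\in\II_n)^r\le v_n^r$, and for the latter $|\mathfrak{a}^{(2)}_{t_{j+1}}-\mathfrak{a}^{(2)}_{t_j}|\le v_{\kappa(t_{j+1})}+v_{\kappa(t_j)}$, where, by monotonicity of $\kappa$ along the selection, each index $n$ appears as $\kappa(t_j)$ resp.\ $\kappa(t_{j+1})$ for a boundary-crossing $j$ at most once; summing and using $\norm{\cdot}_{\ell^r}\le\norm{\cdot}_{\ell^2}$ for $r\ge2$ gives $V^r(\mathfrak{a}^{(2)}_t:t\in\II)\lesssim(\sum_n v_n^2)^{1/2}$. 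The jump case is handled the same way via $\mu N_\mu(f)^{1/2}\le V^2(f)$, hence $N_{\lambda/2}(\mathfrak{a}^{(2)}_t:t\in\II_n)\le4\lambda^{-2}v_n^2$, after noting that a $\lambda$-jump sequence for $\mathfrak{a}^{(2)}$ has at most $\sum_nN_\lambda(\II_n)$ within-block jumps and that each boundary-crossing jump produces a point $t$ with $|\mathfrak{a}^{(2)}_t|\ge\lambda/2$, i.e.\ $N_{\lambda/2}(\II_{\kappa(t)})\ge1$; the oscillation case uses $O^2_{I,N}(f)\le V^2(f)$ within each block together with $|\mathfrak{a}^{(2)}_t-\mathfrak{a}^{(2)}_{I_j}|^2\le2|\mathfrak{a}^{(2)}_t|^2+2|\mathfrak{a}^{(2)}_{I_j}|^2$ on boundary-crossing intervals and the same bounded-overlap bookkeeping. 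Taking $\ell^p(\ZZ^\Gamma)$-norms and the relevant suprema then gives the claim, since $(\sum_n v_n^2)^{1/2}$ is, as a function of $x$, exactly $(\sum_n V^2(\mathfrak{a}_t:t\in\II_n)^2)^{1/2}$.

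I expect the only genuine difficulty to be the combinatorial bookkeeping at the dyadic boundaries: one must check that the ``short'' pieces generated by the boundary-crossing increments are charged to each block only a bounded number of times, so that their total is $\lesssim\sum_n v_n^2$. The monotonicity of $\kappa$ along any increasing selection is precisely what makes this work, and the same device handles all four seminorms at once. The reductions permitting the use of $\mathfrak{a}_{2^n}$ for $n$ with $2^n\notin\II$ and the oscillation collapsing argument in the skeleton term are routine technicalities.
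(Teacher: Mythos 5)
Your decomposition $\mathfrak{a}_t=\mathfrak{a}_{2^{\kappa(t)}}+(\mathfrak{a}_t-\mathfrak{a}_{2^{\kappa(t)}})$ together with the block-by-block accounting, using the monotonicity of $\kappa$ along any increasing selection to get bounded overlap at dyadic boundaries, is exactly the argument behind \cite[Lemma 1.3]{jsw}, which the paper cites without reproducing a proof; so your route coincides with the intended one, extended in the obvious way to the supremum and oscillation seminorms. Two minor points are worth tightening. First, the ``piecewise-constant extension'' reduction as you phrase it only controls the left-hand side; the clean way to say it is that the statement already presupposes $\mathfrak{a}_{2^n}$ is defined for every $n\in\ZZ$ (it appears on the right-hand side), which is automatic in the paper's application with $\II=\UU$ and $M_tf\equiv f$ for $t<1$. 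Second, the oscillation ``collapse'' in the skeleton bound genuinely needs one more step: $\sup_{I_j\le t<I_{j+1}}$ can reach the block $\kappa(I_{j+1})$, whereas the candidate oscillation term $\sup_{n_j\le m<n_{j+1}}$ stops one short; this is absorbed, at the cost of a fixed constant, by comparing against the two coarsened sequences formed by the even- and odd-indexed $n_k$, so that each $\max_{n_k\le m\le n_{k+1}}$ is dominated by $\sup_{n_k\le m<n_{k+2}}$. Neither point affects the correctness of your argument.
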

The next numerical inequality is crucial in our investigations. 
\begin{remark}[Rademacher--Menshov inequality]\label{rem:RM} 
Since all of the seminorms \eqref{supnorm}--\eqref{varnorm} are dominated by the $2$-variation seminorm we deduce that the Rademacher--Menshov inequality \cite[Lemma 2.5, p. 534]{MSZ2} holds for $\calS_p$. Let $b$ and $s$ be fixed positive integers. Then for any complex-valued sequence $(a_j: b\le j\le 2^s)$ one has
\begin{equation}\label{eq:remark3}
    \calS_p(a_n:b\le n\le 2^s)\le\sqrt{2}\norm[\Big]{\sum_{i=1}^s\Big(\sum_{j}{|a_{u_{j+1}^i}-a_{u_j^i}|^2}\Big)^{1/2}}_{\ell^p(\ZZ^\Gamma)},
\end{equation}
where $[u_j^i,u_{j+1}^i)$ are dyadic intervals of the form $[j2^i,(j+1)2^{i})$ for some $0\le i\le s$, $0\le j\le 2^{s-i}-1$, contained in $[b, 2^s]$ (in particular, the number of intervals occurring in the inner sum is finite).
\end{remark}
\section{Proof of Theorem~\ref{Thm1:bootstrap}}
Assume that $p\in(1,\infty)$ and let $f\in \ell^p(\ZZ^\Gamma)$ be a compactly supported function. Let $\mathbb{U}:=\bigcup_{n\in\ZZ}2^n\NN$. Let us note that it is enough to establish the following inequality
\begin{equation}\label{eq:toprove}
    \calS_p(M_t f:t\in\mathbb{U})\lesssim_{\calS_p}\|f\|_{\ell^p(\ZZ^\Gamma)},
\end{equation}
where the implied constant may depend on the seminorm $\calS_p$ and $p\in(1,\infty)$ but is independent of $f$. Let us choose $p_0\in(1,2)$, close to $1$ such that $p\in(p_0,p_0')$. Note that $M_t f\equiv f$ for $t\in(0,1)$. By Proposition~\ref{prop:longandshort} we can split \eqref{eq:toprove} into long ``jumps'' and short variations 
\begin{equation}\label{eq:remark2}
     \calS_p(M_t f:t\in\mathbb{U})\lesssim\calS_p(M_{2^{n}}f:n\in\NN_0)+\norm[\Big]{\Big(\sum_{l=0}^\infty V^2\big(M_{n}f:n\in[2^{l},2^{l+1}]\cap\mathbb{U}\big)^2\Big)^{1/2}}_{\ell^p(\ZZ^\Gamma)}.
\end{equation}
We will estimate separately each part of the right hand side of \eqref{eq:remark2}.
\subsection{Estimates for the long jumps}\label{sec:longjumps}
The aim of this subsection is to give a proof of the estimate for the long jumps,
\begin{equation}\label{longjumps}
    \calS_p(M_{2^n} f: n\in\NN_0)\lesssim_{\calS_p}\|f\|_{\ell^p(\ZZ^\Gamma)},
\end{equation}
where the implicit constant may only depend on the seminorm $\calS_p$, but is independent of $f$. For this purpose we will exploit the following bootstrap argument. For $N\in\NN$ let us consider the following cut-off seminorms
\begin{equation}\label{poprawki:eq1}
    \calS_p(M_{2^n} f: n\in[0,N]\cap\NN_0).
\end{equation}
By $R_{p}(N)$ we denote the smallest constant $C>0$ for which the following estimate holds
\begin{equation*}
   \calS_p(M_{2^n} f: n\in[0,N]\cap\NN_0)\le C\|f\|_{\ell^p(\ZZ^\Gamma)},\quad f\in \ell^p(\ZZ^\Gamma).
\end{equation*}
By the estimate \eqref{square} we know that $R_{p}(N)\lesssim N<\infty$. However, we will show that there exists a constant $C_{p}>0$ such that $R_{p}(N)\le C_{p}$ for any $N\in\N$. If such a constant exists, then by taking limit as $N\to\infty$ and by using the monotone convergence theorem one easily obtains \eqref{longjumps}. Without loss of generality we can assume that $R_{p}(N)>1$ and $N\in\N$ is large.

We start with writing operator $M_{2^n}$ in terms of the discrete Fourier transform. One has
\begin{equation*}
    M_{2^n}f(x)=T_{\ZZ^\Gamma}[m_{2^n}]f(x),\quad x\in\ZZ^\Gamma,
\end{equation*}
where
\begin{equation*}
    m_N(\xi):=\frac{1}{|\Omega_{N}\cap\ZZ^k|}\sum_{y\in\Omega_{N}\cap\ZZ^k}{e((y)^\Gamma\cdot\xi)},\quad N\in\NN_0,\xi\in\TT^\Gamma.
\end{equation*}
In order to prove \eqref{longjumps} we require several appropriately chosen parameters. Let $\alpha>0$ be such that
\begin{equation}\label{alfa}
    \alpha>100\left(\frac{1}{p_0}-\frac{1}{2}\right)\left(\frac{1}{p_0}-\frac{1}{\min\{p,p'\}}\right)^{-1}.
\end{equation}
Fix $\chi\in(0,1/10)$ and let $u\in\NN$ be a large natural number which will be specified later. Now we are able to define the so-called Ionsescu--Wainger projections. For this purpose, we introduce a diagonal matrix $A$ of size $|\Gamma| \times |\Gamma|$  such that
$(A v)_\gamma := \abs{\gamma} v_\gamma$ for any $\gamma \in \Gamma$ and $v\in\RR^\Gamma$, and for any $t > 0$ we also define
corresponding dilations by setting $t^A x=\big(t^{|\gamma|}x_{\gamma}: \gamma\in \Gamma\big)$ for
every $x\in\RR^\Gamma$. Let $\eta\colon\RR^\Gamma\to[0,1]$ be a smooth function such that
\begin{equation}\label{bumpfun}
    \eta(x)=\begin{cases}
    1, & |x|\le1/(16|\Gamma|), \\
    0, & |x|\ge1/(8|\Gamma|).\end{cases}
\end{equation}
Let us set $\varrho:=(10u)^{-1}$ and recall the family of rational fractions $\Sigma_{\leq n^u}$ related to the parameter $\varrho$ described in Section~\ref{sec:IW}. For each $n\in\NN$ we define the following function
\begin{equation}\label{IWproj}
    \Xi_n(\xi):=\sum_{a/q\in\Sigma_{\leq n^u}}\eta^2\big(2^{n(A-\chi I)}(\xi-a/q)\big),
\end{equation}
where $I$ is the $|\Gamma|\times |\Gamma|$ identity matrix. By Theorem \ref{thm:IW-mult} we have that
\begin{equation}\label{IWprojin}
    \big\|T_{\ZZ^\Gamma}[\Xi_n]f)\big\|_{\ell^p(\ZZ^\Gamma)}\lesssim_{u,\,p}\log (n+1)\|f\|_{\ell^p(\ZZ^\Gamma)},
\end{equation}
since for large $n$ one has $\varepsilon_n=2^{-n(|\gamma|-\chi)}\le e^{-n^{1/5}}=e^{-(n^{u})^{2\varrho}}$. We use projections defined in \eqref{IWproj} to partition the multiplier $m_{2^n}$ and estimate \eqref{poprawki:eq1} by
\begin{align}
    \calS_p\big(T_{\ZZ^\Gamma}[m_{2^n}\Xi_n]f):n\in[0,N]\cap\NN_0\big)+\calS_p\big(T_{\ZZ^\Gamma}[(1-\Xi_n)m_{2^n}]f):n\in[0,N]\cap\NN_0\big).
\end{align}
Remark that the first and second term in the above inequality correspond to the major and minor arcs in the Hardy--Littlewood circle method.
\subsubsection{Estimates for the minor arcs}\label{Sec:longjumpsminor}
Now, our aim is to prove that
\begin{equation}\label{eq:minor-arcs}
   \calS_p\big(T_{\ZZ^\Gamma}[(1-\Xi_n)m_{2^n}]f):n\in[0,N]\cap\NN_0\big)\lesssim\|f\|_{\ell^p(\ZZ^\Gamma)}.
\end{equation}
We will obtain the above estimate by exploiting the standard approach--by using Weyl's inequality. Note that each seminorm \eqref{supnorm}--\eqref{varnorm} is bounded by the $2$-variation seminorm. Moreover, since the $r$-variation seminorms are non-increasing in $r$ we may estimate $V^2$ by the $V^1$ and hence 
\begin{align}\label{eq:bound minor-arcs}
    {\rm LHS}\eqref{eq:minor-arcs}\lesssim\sum_{n=0}^\infty\norm[\big]{T_{\ZZ^\Gamma}[(1-\Xi_n)m_{2^n}]f}_{\ell^p(\ZZ^\Gamma)}.
\end{align}
Therefore, it is enough to show
\begin{equation}\label{ex1}
    \norm[\big]{T_{\ZZ^\Gamma}[(1-\Xi_n)m_{2^n}]f}_{\ell^p(\ZZ^\Gamma)}\lesssim(n+1)^{-2}\|f\|_{\ell^p(\ZZ^\Gamma)}.
\end{equation}
In order to prove the estimate above we appeal to Weyl's inequality and proceed as in  \cite[Lemma 3.29, p. 34]{MSZ3}.
\subsubsection{Major arcs and scale distinction}
Now we return to the major arcs. Let $\tilde{\eta}(x)=\eta(x/2)$. We define new multipliers by setting
\begin{equation*}
    \Xi_n^s(\xi):=\sum_{a/q\in\Sigma_{s^u}}\eta^2\big(2^{n(A-\chi I)}(\xi-a/q)\big)\tilde{\eta}^2\big(2^{s(A-\chi I)}(\xi-a/q)\big),
\end{equation*}
where $\Sigma_{s^u}=\Sigma_{\leq(s+1)^u}\setminus\Sigma_{\leq s^u}$ for $s\in\NN$ and $\Sigma_{0^u}=\Sigma_{\leq 1}$. It is easy to see that one has $\Xi_n(\xi)=\sum_{s=0}^{n-1}\Xi_n^s(\xi)$.
Next, by \eqref{eq:split2} we obtain the following estimate
\begin{align*}
    \calS_p\big(T_{\ZZ^\Gamma}[m_{2^n}\Xi_n]f):n\in[0,N]\cap\NN_0\big)\lesssim \sum_{s=0}^N\calS_p\big(T_{\ZZ^\Gamma}[m_{2^n}\Xi_n^s]f):n\in[s,N]\cap\NN_0\big)+\norm[\big]{T_{\ZZ^\Gamma}[m_{2^s}\Xi_s^s]f}_{\ell^p(\ZZ^\Gamma)}.
\end{align*}
Now, for $s\in\NN_0$ we set $\kappa_s=20d\ceil{(s+1)^{1/10}}$ and by \eqref{eq:split1} we see that the expression under the sum is bounded by
\begin{align*}
   \calS_p\big(T_{\ZZ^\Gamma}[m_{2^n}\Xi_n^s]f):n\in[s,N]\cap\NN_0,n\leq 2^{\kappa_s+1}\big)&+\calS_p\big(T_{\ZZ^\Gamma}[m_{2^n}\Xi_n^s]f):n\in\NN_0,n>2^{\kappa_s}\big)\\
   &+\norm[\big]{T_{\ZZ^\Gamma}[m_{2^s}\Xi_s^s]f}_{\ell^p(\ZZ^\Gamma)}.
\end{align*}
The first term corresponds to small scales and the second one to large scales. For $p\in(1,\infty)$ we will show the following bounds:
\begin{align}
    \big\|T_{\ZZ^\Gamma}[m_{2^s}\Xi_s^s]f\big\|_{\ell^p(\ZZ^\Gamma)}&\lesssim(s+1)^{-3}\|f\|_{\ell^p(\ZZ^\Gamma)},\label{pinf1}\\
    \calS_p\big(T_{\ZZ^\Gamma}[m_{2^n}\Xi_n^s]f:n\in\NN_0,n>2^{\kappa_s}\big)&\lesssim(s+1)^{-3}\|f\|_{\ell^p(\ZZ^\Gamma)}.\label{pinf3}
\end{align}
Moreover, in the case of $p\in(1,2]$, we prove that the inequality
\begin{equation}
    \calS_p\big(T_{\ZZ^\Gamma}[m_{2^n}\Xi_n^s]f):n\in[s,N]\cap\NN_0,n\leq 2^{\kappa_s+1}\big)\lesssim R_{p}(N)^{\beta(p)}(s+1)^{-3}\|f\|_{\ell^p(\ZZ^\Gamma)},\label{pinf2}
\end{equation}
holds with some $\beta(p)\in[0,1)$. If we show the above inequalities, then for $p\in(1,2]$ the inequality \eqref{eq:minor-arcs} gives us
\begin{equation*}
   R_{p}(N)\lesssim 1+\sum_{s=0}^\infty{(s+1)^{-3}(R_{p}(N)^{\beta(p)}+1)}\lesssim R_{p}(N)^{\beta(p)},
\end{equation*}
since $R_{p}(N)\ge1$ and $\beta(p)\in[0,1)$. This gives $R_{p}(N)\lesssim_{p} 1$ and thus the proof is complete in the case of $p\in(1,2]$. When $p\in(2,\infty)$ a minor change is required for the estimate (\ref{pinf2}). Namely, in this case we show that
\begin{equation}
     \calS_p\big(T_{\ZZ^\Gamma}[m_{2^n}\Xi_n^s]f):n\in[s,N]\cap\NN,n\leq 2^{\kappa_s+1}\big)\lesssim R_{p'}(N)^{\beta'(p)}(s+1)^{-3}\|f\|_{\ell^p(\ZZ^\Gamma)},\label{pinf4}
\end{equation}
where $1/p+1/p'=1$ and $\beta'(p)\in(0,1)$. Since $p'\in(1,2)$, by the first part one has that $R_{p'}(N)\lesssim_{p'} 1$ and consequently
\begin{equation*}
    R_{p}(N)\lesssim 1+\sum_{s=0}^\infty{(s+1)^{-3}(R_{p'}(N)^{\beta'(p)}+1)}\lesssim_{p}1
\end{equation*}
which finishes the proof in the case of $p\in(2,\infty)$.
\subsubsection{Estimate for \eqref{pinf1}}\label{sec:phiandgauss}
We prove the estimate \eqref{pinf1} by approximating the multiplier $m_{2^s}\Xi_s^s$ by a suitably chosen integral. At first, note that by Theorem~\ref{thm:IW-mult}, for any $p\in(1,\infty)$, we obtain the estimate
\begin{equation}\label{inter1}
\norm[\big]{T_{\ZZ^\Gamma}[m_{2^s}\Xi_s^s]f}_{\ell^p(\ZZ^\Gamma)}\lesssim\log(s+1)\|f\|_{\ell^p(\ZZ^\Gamma)}.
\end{equation}
For further reference, for $N\in\NN_0$ we denote
\begin{align*}
    \Phi_N(\xi)&:=\frac{1}{|\Omega_{N}|}\int_{\Omega_{N}}{e(\xi\cdot(t)^\Gamma){\rm d}t}\quad{\rm and}\quad G(a/q):=\frac{1}{q^k}\sum_{r\in\N_q^k}{e((a/q)\cdot(r)^\Gamma)}.
\end{align*}
We have the following estimates for the function $\Phi_N$:
\begin{equation}\label{Phi1}
    |\Phi_N(\xi)|\lesssim |N^A\xi|_\infty^{-1/|\Gamma|}\quad\text{and}\quad|\Phi_N(\xi)-1|\lesssim |N^A\xi|_\infty,
\end{equation}
where the first estimate follows from the multidimensional van der Corput lemma \cite[Proposition 5, p. 342]{bigs} and the second one is a simple consequence of the mean value theorem. Note that by the multidimensional version of Weyl’s inequality \cite[Proposition 3]{SW2} one has
\begin{equation}\label{gaussum}
    |G(a,q)|\lesssim_k q^{-\delta}
\end{equation}
for some $\delta>0$.

By appealing to \cite[Proposition 4.18]{MSZ3} and proceeding as in \cite[Lemma 3.38, p. 36]{MSZ3} one can prove that
 \begin{equation}\label{reduction}
     (m_{2^s}\Xi_s^s)(\xi)=\sum_{a/q\in\Sigma_{s^u}}G(a/q)\Phi_{2^s}(\xi-a/q)\eta^2\big(2^{s(A-\chi I)}(\xi-a/q)\big)+\mathcal{O}(2^{-s/2}).
 \end{equation}
By Plancherel's theorem, the above estimate together with the estimate for the Gauss sum \eqref{gaussum} yield
\begin{equation}\label{inter2}
\begin{aligned}
    \norm[\big]{T_{\ZZ^\Gamma}[m_{2^s}\Xi_s^s]f}_{\ell^2(\ZZ^\Gamma)}\lesssim (s+1)^{-\alpha}\|f\|_{\ell^2(\ZZ^\Gamma)}
\end{aligned}
\end{equation}
provided that $u\in\NN$ satisfies $u>\alpha\delta^{-1}$. If we interpolate (\ref{inter1}) for $p=p_0$ with (\ref{inter2}) we get that (\ref{pinf1}) holds.
\subsubsection{Estimates for the large scales}
Now we focus on proving the estimate for the large scales,
\begin{equation*}
    \calS_p\big(T_{\ZZ^\Gamma}[m_{2^n}\Xi_n^s]f:n\in\NN_0,n>2^{\kappa_s}\big)\lesssim(s+1)^{-3}\|f\|_{\ell^p(\ZZ^\Gamma)}.
\end{equation*}
Again, by \cite[Proposition 4.18]{MSZ3} (compare with \eqref{reduction}) we have
\begin{align*}
    m_{2^n}\Xi_n^s(\xi)=h_s^n(\xi)+\mathcal{O}(2^{-n/2}),
\end{align*}
where
\begin{equation*}
    h_s^n(\xi):=\sum_{a/q\in\Sigma_{s^u}}{G(a,q)\Phi_{2^n}(\xi-a/q)\eta^2(2^{n(A-\chi I)}(\xi-a/q))\Tilde{\eta}^2(2^{s(A-\chi I)}(\xi-a/q))}
\end{equation*}
and we see that is suffices to prove that
\begin{equation}\label{msw9}
   \calS_p\big(T_{\ZZ^\Gamma}[h_n^s]f):n\in\NN_0,n>2^{\kappa_s}\big)\lesssim(s+1)^{-3}\|f\|_{\ell^p(\ZZ^\Gamma)}.
\end{equation}
Proceeding as in \cite[Section 3.6, pp. 40--41]{MSZ3} and invoking transference principle \cite[Corollary 2.1, p. 196]{MSW} we conclude that the estimate \eqref{msw9} follows by continuous seminorm inequalities. In the case of the jump seminorm we use \cite[Theorem 1.3]{MSZ1}.
\subsubsection{Small scales and the discrete Littlewood--Paley theory: estimates for \eqref{pinf2} and \eqref{pinf4}}\label{sec:SsDPL}
We begin with writing inequalities \eqref{pinf2} and \eqref{pinf4} in a more convenient form, namely
\begin{equation}\label{eq:sq1}
    \calS_p\big(T_{\ZZ^\Gamma}[m_{2^n}\Xi_n^s]f):n\in[s,N]\cap\NN_0,n\leq 2^{\kappa_s+1}\big)\lesssim B_p(N)(s+1)^{-3}\|f\|_{\ell^p(\ZZ^\Gamma)},
\end{equation}
where for $p\in(1,2]$ the constant $B_p(N)=R_p(N)^{\beta(p)}$ and for $p\in(2,\infty)$ we have $B_p(N)=R_{p'}(N)^{\beta'(p)}$ with $\beta(p),\beta'(p)\in[0,1)$. By Remark~\ref{rem:RM} we can apply the Rademacher--Menshov inequality \eqref{eq:remark3} and estimate
\begin{equation}\label{eq:sq2}
   {\rm LHS}\eqref{eq:sq1}\lesssim\sum_{i=0}^{\kappa_s+1}\norm[\Big]{\Big(\sum_{j=0}^{2^{\kappa_s+1-i}-1}\big|\sum_{n\in I_j^i}T_{\ZZ^\Gamma}\big[(m_{2^{n+1}}\Xi_{n+1}^s-m_{2^n}\Xi_n^s)\big]f\big|^2\Big)^{1/2}}_{\ell^p(\ZZ^\Gamma)},
\end{equation}
where $I_j^i:=[j2^i,(j+1)2^i)\cap [s,\min\{N,2^{\kappa_s+1}\})\cap\NN_0$ since the inner sum telescopes. Now, by the triangle inequality one has
\begin{align}
{\rm RHS}\eqref{eq:sq2}&\le\sum_{i=0}^{\kappa_s+1}\norm[\Big]{\Big(\sum_{j}\big|\sum_{n\in I_j^i}T_{\ZZ^\Gamma}\big[(m_{2^{n+1}}-m_{2^n})\Xi_n^s\big]f\big|^2\Big)^{1/2}}_{\ell^p(\ZZ^\Gamma)}\label{kin1}\\
    &\,\,+\sum_{i=0}^{\kappa_s+1}\norm[\Big]{\Big(\sum_{j}\big|\sum_{n\in I_j^i}T_{\ZZ^\Gamma}\big[m_{2^{n+1}}(\Xi_{n+1}^s-\Xi_n^s)\big]f\big|^2\Big)^{1/2}}_{\ell^p(\ZZ^\Gamma)}\label{kin2}.
\end{align}
Here and later on we will omit the limits of summation in $j$ for the sake of clarity. Now we invoke Khintchine’s inequality to \eqref{kin1} and \eqref{kin2} and as a consequence we see that the estimate \eqref{eq:sq1} will follow if we show that inequalities
\begin{align}
    \norm[\Big]{\sum_{j}\sum_{n\in I_j^i}T_{\ZZ^\Gamma}\big[\varepsilon_j(m_{2^{n+1}}-m_{2^n})\Xi_n^s\big]f}_{\ell^p(\ZZ^\Gamma)}&\lesssim B_p(N)(s+1)^{-5}\|f\|_{\ell^p(\ZZ^\Gamma)},\label{kin5}\\
    \norm[\Big]{\sum_{j}\sum_{n\in I_j^i}T_{\ZZ^\Gamma}\big[\varepsilon_j m_{2^{n+1}}(\Xi_{n+1}^s-\Xi_n^s)]f}_{\ell^p(\ZZ^\Gamma)}&\lesssim(s+1)^{-5}\|f\|_{\ell^p(\ZZ^\Gamma)},\label{kin6}
\end{align}
hold for every $i\le\kappa_s+1$ and any sequence $(\varepsilon_j\colon j\le 2^{\kappa_s+1-i}-1)\subseteq\{-1,1\}$. Finally, for estimates \eqref{kin5} and \eqref{kin6} it is enough to show that for any interval $I\subseteq[s,\min\{N,2^{\kappa_s+1}\})\cap\NN$ and for any sequence $(\varepsilon_n\colon n\in I)\subseteq\{-1,1\}$ one has
\begin{align}
    \norm[\Big]{\sum_{n\in I}T_{\ZZ^\Gamma}\big[\varepsilon_n(m_{2^{n+1}}-m_{2^n})\Xi_n^s]f}_{\ell^p(\ZZ^\Gamma)}&\lesssim B_p(N)(s+1)^{-5}\|f\|_{\ell^p(\ZZ^\Gamma)},\label{kin7}\\
    \norm[\Big]{\sum_{n\in I}T_{\ZZ^\Gamma}\big[\varepsilon_n m_{2^{n+1}}(\Xi_{n+1}^s-\Xi_n^s)\big]f}_{\ell^p(\ZZ^\Gamma)}&\lesssim(s+1)^{-5}\|f\|_{\ell^p(\ZZ^\Gamma)}.\label{kin8}
\end{align}
To prove \eqref{kin8}, by the triangle inequality, it is enough to establish
\begin{equation}\label{i3}
    \big\|T_{\ZZ^\Gamma}[m_{2^{n+1}}(\Xi_{n+1}^s-\Xi_n^s)]f\big\|_{\ell^p(\ZZ^\Gamma)}\lesssim (s+1)^{-5}(n+1)^{-3}\|f\|_{\ell^p(\ZZ^\Gamma)}.
\end{equation}
For any $p\in(1,\infty)$ by Theorem~\ref{thm:IW-mult} we have
\begin{equation}\label{i1}
    \big\|T_{\ZZ^\Gamma}[m_{2^{n+1}}(\Xi_{n+1}^s-\Xi_n^s)]f\big\|_{\ell^p(\ZZ^\Gamma)}\lesssim\log(s+1)\|f\|_{\ell^p(\ZZ^\Gamma)}.
\end{equation}
In the case of $p=2$ we use \cite[Proposition 4.18]{MSZ3} (compare with \eqref{reduction}, the only difference is the error term which is a consequence of the inequality $2^{-n/2}\lesssim2^{-(n+s)/4}$ since $n\ge s$) to get
\begin{equation}\label{eq:approx1}
    m_{2^{n+1}}(\xi)=G(a/q)\Phi_{2^{n+1}}(\xi-a/q)+\mathcal{O}(2^{-(n+s)/4}),
\end{equation}
where $a/q$ is the rational approximation of $\xi$ such that for every $\gamma\in\Gamma$ holds $|\xi_\gamma-a_\gamma/q|\lesssim 2^{-n(|\gamma|-\chi)}$. Next, we note that the function $\Xi_{n+1}^s-\Xi_n^s$ is nonzero only for $\xi$ such that $|2^{(n+1)(A-\chi I)}(\xi-a/q)|\gtrsim1$ and $|2^{n(A-\chi I)}(\xi-a/q)|\lesssim1$, where $a/q\in\Sigma_{s^u}$. Hence, by Plancherel's theorem the estimate \eqref{gaussum} and by the van der Corput estimate in \eqref{Phi1} one gets
\begin{equation*}
    \big\|T_{\ZZ^\Gamma}[m_{2^{n+1}}(\Xi_{n+1}^s-\Xi_n^s)]f\big\|_{\ell^2(\ZZ^\Gamma)}\lesssim(s+1)^{-\alpha}2^{-n\chi/|\Gamma|}\|f\|_{\ell^2(\ZZ^\Gamma)}.
\end{equation*}
Interpolating the above inequality with \eqref{i1} for $p=p_0$ yields \eqref{i3}.\\
\indent Now we focus our attention on the proof of the estimate \eqref{kin7}. For this purpose we introduce new multipliers of the form
\begin{equation*}
    \Xi_n^{s,j}(\xi):=\sum_{a/q\in\Sigma_{s^u}}\eta^2\big(2^{nA+jI}(\xi-a/q)\big)\tilde{\eta}^2\big(2^{s(A-\chi I)}(\xi-a/q)\big),\quad j\in\ZZ.
\end{equation*}
We have the following decomposition
\begin{equation*}
    \Xi_n^s(\xi)=\sum_{-\floor{\chi n}\le j<n}\big(\Xi_n^{s,j}(\xi)-\Xi_n^{s,j+1}(\xi)\big)+\big(\Xi_n^{s,-\chi n}(\xi)-\Xi_n^{s,-\floor{\chi n}}(\xi)\big)+\Xi_n^{s,n}(\xi),
\end{equation*}
since the sum above telescopes. By using the new multipliers one may write
\begin{align*}
    {\rm LHS}\eqref{kin7}&\le\big\|\sum_{n\in I}\sum_{-\floor{\chi n}\le j<n}T_{\ZZ^\Gamma}\big[\varepsilon_n(m_{2^{n+1}}-m_{2^n})(\Xi_n^{s,j}-\Xi_n^{s,j+1})\big]f\big\|_{\ell^p(\ZZ^\Gamma)}\\
    &\,\,+\big\|\sum_{n\in I}T_{\ZZ^\Gamma}\big[\varepsilon_n(m_{2^{n+1}}-m_{2^n})\big((\Xi_n^{s,-\chi n}-\Xi_n^{s,-\floor{\chi n}})+\Xi_n^{s,n}\big)\big]f\big\|_{\ell^p(\ZZ^\Gamma)}.
\end{align*}
Consequently, to obtain \eqref{kin7} it is enough to show two inequalities:
\begin{equation}\label{eq:sq3}
    \big\|\sum_{n\in I}\sum_{-\floor{\chi n}\le j<n}T_{\ZZ^\Gamma}\big[\varepsilon_n(m_{2^{n+1}}-m_{2^n})(\Xi_n^{s,j}-\Xi_n^{s,j+1})\big]f\big\|_{\ell^p(\ZZ^\Gamma)}\lesssim B_p(N)(s+1)^{-5}\|f\|_{\ell^p(\ZZ^\Gamma)}
\end{equation}
and
\begin{equation}\label{eq:sq4}
    \big\|\sum_{n\in I}T_{\ZZ^\Gamma}\big[\varepsilon_n(m_{2^{n+1}}-m_{2^n})\big((\Xi_n^{s,-\chi n}-\Xi_n^{s,-\floor{\chi n}})+\Xi_n^{s,n}\big)\big]f\big\|_{\ell^p(\ZZ^\Gamma)}\lesssim(s+1)^{-5}\|f\|_{\ell^p(\ZZ^\Gamma)}.
\end{equation}
In order to show \eqref{eq:sq4} we apply the triangle inequality and we see it will follow from
\begin{equation*}
     \big\|T_{\ZZ^\Gamma}\big[(m_{2^{n+1}}-m_{2^n})\big((\Xi_n^{s,-\chi n}-\Xi_n^{s,-\floor{\chi n}})+\Xi_n^{s,n}\big)\big]f\big\|_{\ell^p(\ZZ^\Gamma)}\lesssim(s+1)^{-5}(n+1)^{-3}\|f\|_{\ell^p(\ZZ^\Gamma)}.
\end{equation*}
By using Theorem~\ref{thm:IW-mult}, estimates \eqref{Phi1} and \eqref{gaussum} one can prove the above inequality in the same way as \eqref{i3} -- we omit the details.\\
\indent Now we may return to \eqref{eq:sq3}. Let $I_j:=\big\{n\in\NN_0\colon n\in I,n\ge\max\{-j/\chi,\,j-1\}\big\}$. By changing the order of summation, we see that to prove \eqref{eq:sq3} it is enough to show that
\begin{equation}\label{ex6.1}
    \Big\|\sum_{n\in I_j}
    T_{\ZZ^\Gamma}\big[\varepsilon_n(m_{2^{n+1}}-m_{2^n})(\Xi_n^{s,j}-\Xi_n^{s,j+1})\big]f\Big\|_{\ell^p(\ZZ^\Gamma)}\lesssim (s+1)^{-5}B_p(N)2^{-|j|\beta}\|f\|_{\ell^p(\ZZ^\Gamma)}
\end{equation}
holds for some $\beta=\beta_p>0$. Remark that one has
\begin{equation*}
    \left(\Xi_n^{s,j}-\Xi_n^{s,j+1}\right)(\xi)=\Delta_{n,\,s}^{j,\,1}(\xi)\Delta_{n,\,s}^{j,\,2}(\xi),
\end{equation*}
where
\begin{align*}
    \Delta_{n,\,s}^{j,\,1}(\xi)&:=\sum_{a/q\in\Sigma_{s^u}}\Big[\eta\left(2^{nA+(j-1)I}(\xi-a/q)\right)-\eta\left(2^{nA+(j+2)I}(\xi-a/q)\right)\Big]\Tilde{\eta}(2^{s(A-\chi I)}(\xi-a/q)),\\
    \Delta_{n,\,s}^{j,\,2}(\xi)&:=\sum_{a/q\in\Sigma_{s^u}}\Big[\eta^2\big(2^{nA+jI}(\xi-a/q)\big)-\eta^2\big(2^{nA+(j+1)I}(\xi-a/q)\big)\Big]\Tilde{\eta}(2^{s(A-\chi I)}(\xi-a/q)).
\end{align*}
Now we will derive from the discrete Littlewood--Paley theory which originates in \cite[Theorem 3.3]{M10}. Let $j,n\in\ZZ$ and let $\Phi_{j,\,n}(\xi)=\Phi(2^{nA+jI}\xi)$, where $\Phi$ is a Schwartz function such that $\Phi(0)=0$. Observe that one has
\begin{equation*}
    |\Phi_{n,j}(\xi)|\lesssim\min\{|2^{nA+jI}\xi|,|2^{nA+jI}\xi|^{-1}\}.
\end{equation*}
Moreover, for any $p\in(1,\infty)$ there is a constant $C_p>0$ such that
\begin{equation*}
    \norm[\big]{\sup_{n\in\ZZ}|T_{\RR^\Gamma}[|\Phi_{n,j}|]f|}_{L^p(\RR^\Gamma)}\le C_p\|f\|_{L^p(\RR^\Gamma)}.
\end{equation*}
Hence, by \cite[Theorem B]{DR86} for any $-\infty\le M_1\le M_2\le\infty$ we have
\begin{equation*}
    \norm[\big]{\big(\sum_{M_1\le n\le M_2}|T_{\RR^\Gamma}[\Phi_{j,n}]f|^2\big)^{1/2}}_{L^p(\RR^\Gamma)}\lesssim_p\|f\|_{L^p(\RR^\Gamma)},
\end{equation*}
where the implied constant is independent of $j, M_1$ and $M_2$. Therefore, by Theorem~\ref{thm:IW-mult} the multiplier
\begin{equation}\label{eq:singulardiscrete}
    \Omega_s^{j,\,n}(\xi):=\sum_{a/q\in\Sigma_{s^u}}{\Phi_{j,\,n}(\xi-a/q)\Tilde{\eta}(2^{s(A-\chi I)}(\xi-a/q))}
\end{equation}
satisfies
\begin{equation}\label{eq:DPL1}
    \Big\|\big(\sum_{M_1\le n\le M_2}|T_{\ZZ^\Gamma}[\Omega_s^{j,\,n}]f|^2\big)^{1/2}\Big\|_{\ell^p(\ZZ^\Gamma)}\lesssim\log(s+1)\|f\|_{\ell^p(\ZZ^\Gamma)}.
\end{equation}
Moreover, if $\Phi$ is a real valued function, then the dual version of the inequality \eqref{eq:DPL1} also holds, namely
\begin{equation}\label{eq:DPL2}
    \big\|\sum_{M_1\le n\le M_2}|T_{\ZZ^\Gamma}[\Omega_s^{j,\,n}]f_n)\big\|_{\ell^p(\ZZ^\Gamma)}\lesssim\log (s+1)\Big\|\big(\sum_{M_1\le n\le M_2}{|f_n|^2}\big)^{1/2}\Big\|_{\ell^p(\ZZ^\Gamma)},
\end{equation}
where $(f_n: M_1\le n\le M_2)$ is a sequence of functions such that
\begin{equation*}
    \norm[\big]{\big(\sum_{M_1\le n\le M_2}|f_n|^2\big)^{1/2}}_{\ell^p(\ZZ^\Gamma)}<\infty.
\end{equation*}
It is easy to see that multipliers $\Delta_{n,\,s}^{j,\,1}$ and $\Delta_{n,\,s}^{j,\,2}$ can be written as \eqref{eq:singulardiscrete}. Hence, by applying the inequality \eqref{eq:DPL2} to the multiplier $\Delta_{n,\,s}^{j,\,1}$ we get 
\begin{align*}
   {\rm LHS}\eqref{ex6.1}\lesssim\log(s+1)\Big\|\big(\sum_{n\in I_j}|T_{\ZZ^\Gamma}[(m_{2^{n+1}}-m_{2^n})\Delta_{n,\,s}^{j,\,2}]f|^2\big)^{1/2}\Big\|_{\ell^p(\ZZ^\Gamma)}.
\end{align*}
Consequently, the estimate \eqref{ex6.1} will follow if we prove that
\begin{equation}\label{ex6}
    \Big\|\big(\sum_{n\in I_j}|T_{\ZZ^\Gamma}[(m_{2^{n+1}}-m_{2^n})\Delta_{n,\,s}^{j,\,2}]f|^2\big)^{1/2}\Big\|_{\ell^p(\ZZ^\Gamma)}\lesssim(s+1)^{-10}B_p(N)2^{-|j|\beta}\|f\|_{\ell^p(\ZZ^\Gamma)},
\end{equation}
for any $p\in(1,\infty)$.
\subsubsection{Bootstrap estimates for the square function in \eqref{ex6}}\label{sec:bootstrapsquare}
We start with proving some estimates in the case of $p=2$. For simplicity, we denote $$\psi_{n,\,j}^{a/q,\,s}(\xi):=\big[\eta^2\big(2^{nA+jI}(\xi-a/q)\big)-\eta^2\big(2^{nA+(j+1)I}(\xi-a/q)\big)\big]\Tilde{\eta}\big(2^{s(A-\chi I)}(\xi-a/q)\big).$$ Observe that $\psi_{n,\,j}^{a/q,\,s}$ is nonzero only if $2^{-(j+2)}\lesssim|2^{nA}(\xi-a/q)|_\infty\lesssim2^{-j}$. Hence $|\xi_\gamma-a_\gamma/q|\le2^{-n(|\gamma|-\chi)}$ for $\gamma\in\Gamma$ since $n\ge-j/\chi$. By \cite[Proposition 4.18]{MSZ3} we have
\begin{equation}\label{eq1:bsql2}
    m_{2^n}(\xi)=G(a,q)\Phi_{2^n}(\xi-a/q)+\mathcal{O}(2^{-n/2}),
\end{equation}
where $a/q$ satisfy $|\xi_\gamma-a_\gamma/q|\lesssim 2^{-n(|\gamma|-\chi)}$ for every $\gamma\in\Gamma$. To simplify notation we denote $w_n(\xi):=\min\{|2^{nA}\xi|_\infty,|2^{nA}\xi|_\infty^{-1/|\Gamma|}\}$. 
By using estimates \eqref{eq1:bsql2}, \eqref{Phi1} and \eqref{gaussum} one may conclude that
\begin{align*}
    |(m_{2^{n+1}}-m_{2^n})(\xi)|&\lesssim (s+1)^{-u\delta}w_n(\xi-a/q)+(s+1)^{-u\delta}\mathcal{O}(2^{-n/4}),\\
     |(m_{2^{n+1}}-m_{2^n})(\xi)|\psi_{n,\,j}^{a/q,\,s}(\xi)&\lesssim (s+1)^{-u\delta}2^{-|j|/d}+(s+1)^{-u\delta}\mathcal{O}(2^{-n/4}).
\end{align*}
Hence, by using the above estimates we obtain
\begin{align*}
    &\sum_{n\in I_j}\sum_{a/q\in\Sigma_{s^u}}|(m_{2^{n+1}}-m_{2^n})(\xi)\psi_{n,\,j}^{a/q,\,s}(\xi)|^2\lesssim(s+1)^{-2u\delta}2^{-|j|\beta},
\end{align*}
 for some $\beta>0$. Hence, by Plancherel's theorem 
\begin{equation}\label{m1}
\begin{aligned}
    \Big\|\big(\sum_{n\in I_j}|T_{\ZZ^\Gamma}[(m_{2^{n+1}}-m_{2^n})\Delta_{n,\,s}^{j,\,2}]f|^2\big)^{1/2}\Big\|_{\ell^2(\ZZ^\Gamma)}&\lesssim(s+1)^{-\delta u}2^{-|j|\beta/2}\|f\|_{\ell^2(\ZZ^\Gamma)}.
\end{aligned}
\end{equation}
Let us note that the above estimate together with the estimate for the large scales \eqref{pinf3} proves that $R_2(N)<\infty$ for any $N\in\NN$ so we have proven estimate \eqref{longjumps} in the case of $p=2$. In order to handle other values of $p$ we will relate to the bootstrap argument from \cite{MSZ2}. This argument originates in \cite{NSW} and \cite{DR86}.
\begin{lemma}{\rm \cite[Lemma 2.8]{MSZ2}}\label{lemma28}
Suppose that $(X,\mathcal{B},m)$ is a $\sigma$-finite measure space and $(B_k)_{k\in\mathbb{J}}$ is a sequence of linear operators on $L^1(X)+L^\infty(X)$ indexed by a countable set $\mathbb{J}$. The corresponding maximal operator is defined by
\begin{equation*}
    B_{\ast,\,\mathbb{J}}f\colon=\sup_{k\in\mathbb{J}}\sup_{|g|\le|f|}|B_kg|,
\end{equation*}
where the supremum is taken in the lattice sense. Let $q_0,q_1\in[1,\infty]$ and $0\le\vartheta\le1$ with $\frac12=\frac{1-\vartheta}{q_0}$ and $q_0\le q_1$. Let $q_\vartheta\in[q_0,q_1]$ be given by $\frac{1}{q_\vartheta}=\frac{1-\vartheta}{q_0}+\frac{\vartheta}{q_1}=\frac12+\frac{1-q_0/2}{q_1}$. Then
\begin{equation*}
    \Big\| \big(\sum_{k\in\mathbb{J}}|B_kg_k|^2 \big)^{1/2}\Big\|_{L^{q_\vartheta}(X)}\le(\sup_{k\in\mathbb{J}}\|B_k\|_{L^{q_0}\to L^{q_0}})^{1-\vartheta}\|B_{\ast,\,\mathbb{J}}\|^\vartheta_{L^{q_1}\to L^{q_1}}\Big\| \big(\sum_{k\in\mathbb{J}}|g_k|^2 \big)^{1/2}\Big\|_{L^{q_\vartheta}(X)}.
\end{equation*}
\end{lemma}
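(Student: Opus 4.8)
The plan is to read the asserted inequality as a complex-interpolation bound for the single linear operator
\[
\mathbf{T}\colon (g_k)_{k\in\mathbb{J}}\longmapsto (B_kg_k)_{k\in\mathbb{J}}
\]
acting between mixed-norm Lebesgue spaces $L^{q}(X;\ell^{r}(\mathbb{J}))$. Since both $\sup_{k}\|B_k\|_{L^{q_0}\to L^{q_0}}$ and $\|B_{\ast,\mathbb{J}}\|_{L^{q_1}\to L^{q_1}}$ only decrease when $\mathbb{J}$ is replaced by a subset, I would first prove the estimate for finite $\mathbb{J}$ and then recover the general case by exhausting $\mathbb{J}$ by finite subsets and invoking the monotone convergence theorem. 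This reduction also neutralizes the $\ell^{\infty}(\mathbb{J})$ endpoint that will appear below, since for finite $\mathbb{J}$ it is just a finite-dimensional sequence space.

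Next I would record the two endpoint estimates for $\mathbf{T}$. At the exponent $q_0$ (which, by $\tfrac12=\tfrac{1-\vartheta}{q_0}$, automatically lies in $[1,2]$), using only linearity of the $B_k$ and Tonelli's theorem --- here $\sigma$-finiteness of $X$ enters --- one has
\[
\|\mathbf{T}(g_k)\|_{L^{q_0}(\ell^{q_0})}^{q_0}=\sum_{k}\|B_kg_k\|_{L^{q_0}}^{q_0}\le\Big(\sup_{k}\|B_k\|_{L^{q_0}\to L^{q_0}}\Big)^{q_0}\sum_{k}\|g_k\|_{L^{q_0}}^{q_0}=\Big(\sup_{k}\|B_k\|_{L^{q_0}\to L^{q_0}}\Big)^{q_0}\|(g_k)\|_{L^{q_0}(\ell^{q_0})}^{q_0}.
\]
At the exponent $q_1$ the decisive point is the pointwise domination $\sup_{k}|B_kg_k|\le B_{\ast,\mathbb{J}}(\sup_{k}|g_k|)$, which is immediate from the definition of $B_{\ast,\mathbb{J}}$: for each fixed $k$, $|B_kg_k|\le\sup_{|g|\le|g_k|}|B_kg|\le\sup_{|g|\le\sup_j|g_j|}|B_kg|\le B_{\ast,\mathbb{J}}(\sup_j|g_j|)$. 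It follows that
\[
\|\mathbf{T}(g_k)\|_{L^{q_1}(\ell^{\infty})}=\big\|\sup_{k}|B_kg_k|\big\|_{L^{q_1}}\le\|B_{\ast,\mathbb{J}}\|_{L^{q_1}\to L^{q_1}}\big\|\sup_{k}|g_k|\big\|_{L^{q_1}}=\|B_{\ast,\mathbb{J}}\|_{L^{q_1}\to L^{q_1}}\|(g_k)\|_{L^{q_1}(\ell^{\infty})}.
\]
The sublinearity of $B_{\ast,\mathbb{J}}$ is used only to obtain this operator-norm bound for the \emph{linear} map $\mathbf{T}$, so there is no obstruction to interpolating $\mathbf{T}$ afterwards.

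Then I would apply complex interpolation to $\mathbf{T}$ between the couple $\big(L^{q_0}(\ell^{q_0}),L^{q_1}(\ell^{\infty})\big)$ on both the source and the target side. By the standard identification of complex-interpolation spaces of mixed-norm Lebesgue spaces (equivalently, Calderón's product formula for Banach lattices),
\[
[L^{q_0}(\ell^{q_0}),L^{q_1}(\ell^{\infty})]_{\vartheta}=L^{q_\vartheta}(\ell^{r}),\qquad \frac1{q_\vartheta}=\frac{1-\vartheta}{q_0}+\frac{\vartheta}{q_1},\qquad \frac1r=\frac{1-\vartheta}{q_0}+\frac{\vartheta}{\infty}=\frac{1-\vartheta}{q_0},
\]
and the hypothesis $\tfrac12=\tfrac{1-\vartheta}{q_0}$ is exactly what forces $r=2$. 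Interpolating the two endpoint bounds therefore yields
\[
\|\mathbf{T}\|_{L^{q_\vartheta}(\ell^{2})\to L^{q_\vartheta}(\ell^{2})}\le\Big(\sup_{k}\|B_k\|_{L^{q_0}\to L^{q_0}}\Big)^{1-\vartheta}\|B_{\ast,\mathbb{J}}\|_{L^{q_1}\to L^{q_1}}^{\vartheta},
\]
which is precisely the claimed inequality.

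The only place that needs genuine care is the identification of the complex-interpolation space when one endpoint is an $\ell^{\infty}$-space, where the usual subtleties about density of the intersection and the difference between the upper and lower complex methods arise; this is exactly why I would carry the argument out for finite $\mathbb{J}$ first, after which $[L^{q_0}(\ell^{q_0}),L^{q_1}(\ell^{\infty})]_{\vartheta}=L^{q_\vartheta}(\ell^{2})$ is entirely standard and the interpolation theorem for the linear operator $\mathbf{T}$ applies verbatim. Everything else --- the two endpoint inequalities and the arithmetic of the exponents --- is routine.
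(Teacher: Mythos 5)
Your proof is correct and reproduces the argument of the cited reference \cite[Lemma 2.8]{MSZ2} (the present paper only quotes the lemma without proof): one views $(g_k)\mapsto(B_kg_k)$ as a single linear operator, establishes the endpoint bounds on $L^{q_0}(\ell^{q_0})$ via Tonelli and on $L^{q_1}(\ell^\infty)$ via the pointwise domination by $B_{\ast,\mathbb{J}}$, and then applies complex interpolation of mixed-norm spaces, the hypothesis $\tfrac12=\tfrac{1-\vartheta}{q_0}$ being exactly what pins the interpolated sequence exponent at $2$. The reduction to finite $\mathbb{J}$ to sidestep the $\ell^\infty$-endpoint subtleties, followed by monotone convergence, is a clean way to justify the interpolation step.
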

\noindent Here we have to make some distinction between the considered seminorms \eqref{supnorm}--\eqref{varnorm}.\\
\indent \emph{Case of the supremum, oscillation and $r$-variational seminorm.} At first let $p\in(1,2]$. We will apply Lemma~\ref{lemma28} with the set $\mathbb{J}=I_j\subseteq[0,N)$, parameters $q_0=1$, $q_1=p$, $\vartheta=1/2$, operators $B_n=M_{2^{n+1}}-M_{2^{n}}$ and functions $g_n=T_{\ZZ^\Gamma}[\Delta_{n,s}^{j,2}]f$. Now, since the norm of the operator $M_{2^n}$ is uniformly bounded we see that  for every $q\in(1,\infty)$ one has
\begin{align*}
    \sup_{n< N}\|B_n\|_{\ell^q\to \ell^q}&\lesssim 1.
\end{align*}
When we consider the supremum seminorm,
\begin{equation*}
    \calS_p(M_{2^n} f: n\in[0,N]\cap\NN_0)=\norm[\big]{\sup_{0\le n\le N}|(M_{2^{n}}-M_1)f|}_{\ell^p(\ZZ^\Gamma)},
\end{equation*}
then it is easy to check that for every $q\in(1,\infty)$
\begin{align*}
    \|B_{\ast,n< N}\|_{\ell^q\to \ell^q}\lesssim R_{q}(N)
\end{align*}
since the norm of the operator $M_t$ is uniformly bounded. Similarly, in the case of the oscillation seminorm,
\begin{equation*}
    \calS_p(M_{2^n} f: n\in[0,N]\cap\NN_0)=\sup_{K\in\NN}\sup_{I\in\mathfrak{S}_K([0,N]\cap\NN_0)}\norm[\big]{O_{I,K}^2(M_{2^n}f:n\in[0,N]\cap\NN_0)}_{\ell^p(\ZZ^\Gamma)},
\end{equation*}
by the inequality \eqref{domsup2} we have that the estimate
\begin{align*}
    \|B_{\ast,n< N}\|_{\ell^q\to \ell^q}\lesssim R_{q}(N)
\end{align*}
 yields for any $q\in(1,\infty)$. Finally, in the case of $r$-variational seminorm,
\begin{equation*}
     \calS_p(M_{2^n} f: n\in[0,N]\cap\NN_0)=\norm[\big]{V^r(M_{2^n}f:n\in[0,N]\cap\NN_0)}_{\ell^p(\ZZ^\Gamma)},
\end{equation*}
by inequality \eqref{domsup1} we see that for any $r\in(2,\infty)$ the estimate 
\begin{align*}
    \|B_{\ast,n< N}\|_{\ell^q\to \ell^q}\lesssim R_{q}(N)
\end{align*}
also holds for $q\in(1,\infty)$. Hence, in the case of the supremum, oscillation and $r$-variational seminorm by Lemma~\ref{lemma28} we may write
\begin{equation}\label{m}
\begin{aligned}
   \Big\|\big(\sum_{n\in\mathbb{J}}| B_n T_{\ZZ^\Gamma}[\Delta_{n,\,s}^{j,\,2}]f|^2\big)^{1/2}\Big\|_{\ell^{q_{1/2}}(\ZZ^\Gamma)}&\lesssim R_{p}(N)^{1/2}\Big\|\big(\sum_{n\in\mathbb{J}}|T_{\ZZ^\Gamma}[\Delta_{n,\,s}^{j,\,2}]f|^2\big)^{1/2}\Big\|_{\ell^{q_{1/2}}(\ZZ^\Gamma)}\\
   &\lesssim R_{p}(N)^{1/2}\log(s+1)\|f\|_{\ell^{q_{1/2}}(\ZZ^\Gamma)},
\end{aligned}
\end{equation}
where in the last inequality we have used \eqref{eq:DPL1}. Since $q_{1/2}<p\le2$, there exists $t\in[0,1)$ such that $\frac{1}{p}=\frac{t}{q_{1/2}}+\frac{1-t}{2}$. If we use definition of $q_{1/2}$ from Lemma \ref{lemma28} we see that $t=2-p$. Hence, by interpolating \eqref{m1} with \eqref{m} one has
\begin{align*}
    \Big\|\big(\sum_{n\in\JJ}|B_n T_{\ZZ^\Gamma}[\Delta_{n,\,s}^{j,\,2}]f|^2\big)^{1/2}\Big\|_{\ell^p(\ZZ^\Gamma)}\lesssim(s+1)^{-u\delta(1-t)}2^{-|j|\frac{\beta}{2} (1-t)}R_{p}(N)^{(2-p)/2}(s+1)\|f\|_{\ell^p(\ZZ^\Gamma)}.
\end{align*}
Since $u\in\NN$ can be large, we get that \eqref{ex6} is satisfied with $B_p(N)=R_p(N)^{(2-p)/2}$. Hence, we see that for $p\in(1,2]$ the inequality \eqref{pinf2} holds with $\beta(p)=\frac{2-p}{2}\in[0,1)$.

Now let us assume that $p\in(2,\infty)$. Then one has $p'\in(1,2)$ and therefore by applying Lemma \ref{lemma28} with $q_0=1$, $q_1=p'$ and  $\vartheta=1/2$, and by inequality \eqref{eq:DPL1} we obtain
\begin{equation}\label{ma}
\begin{aligned}
   \Big\|\big(\sum_{n\in\mathbb{J}}|B_nT_{\ZZ^\Gamma}[\Delta_{n,\,s}^{j,\,2}]f|^2\big)^{1/2}\Big\|_{\ell^{q_{1/2}}(\ZZ^\Gamma)}\lesssim R_{p'}(N)^{1/2}\log(s+1)\|f\|_{\ell^{q_{1/2}}(\ZZ^\Gamma)}
\end{aligned}
\end{equation}
where $q_{1/2}=2p/(2p-1)$. Now, since $B_n=M_{2^{n+1}}-M_{2^n}$ is a convolution operator we see that by duality the inequality \eqref{ma} holds for $q_{1/2}'=2p$. Since $2<p<q_{1/2}'$ there exists $\tau\in[0,1)$ such that $\frac{1}{p}=\frac{\tau}{q_{1/2}'}+\frac{1-\tau}{2}$ and 
\begin{equation*}
    \tau=\frac{2-p}{1-p}.
\end{equation*}
Hence, by interpolating \eqref{m1} with \eqref{ma} for $q_{1/2}'$ we may write
\begin{align*}
    \Big\|\big(\sum_{n\in\JJ}|B_nT_{\ZZ^\Gamma}[\Delta_{n,\,s}^{j,\,2}]f|^2\big)^{1/2}\Big\|_{\ell^p(\ZZ^\Gamma)}\lesssim(s+1)^{-u\delta(1-\tau)}2^{-|j|\frac{\beta}{2}(1-\tau)}R_{p'}(N)^{\frac{2-p}{2(1-p)}}(s+1)\|f\|_{\ell^p(\ZZ^\Gamma)}.
\end{align*}
Since $u\in\NN$ can be large, we get that \eqref{ex6} is satisfied with $B_p(N)=R_{p'}(N)^{\frac{2-p}{2(1-p)}}$. Hence, we see that for $p\in(2,\infty)$ the inequality \eqref{pinf4} holds with $\beta'(p)=\frac{2-p}{2(1-p)}\in[0,1)$.\\

\indent \emph{Case of the jump quasi-seminorm} In the context of the jump quasi-seminorm we need to proceed in a slightly different way since in this case we do not have a pointwise estimate of the form \eqref{domsup1} or even an $\ell^p$-estimate of the form \eqref{domsup2}. Fortunately, for $r>2$ one has ``weak $\ell^p$''-estimate \eqref{domweak} for the $r$-variation which we will use at this moment. As mentioned before we have already proved that $R_2(N)\lesssim1$. Hence, we may assume that $p\in(1,2)$. Let us consider $\lambda\in(0,1)$ such that
\begin{equation}\label{eq:lambdacon}
    \lambda>\max\big\{0,\frac{4-3p}{(p-2)^2}\big\}.
\end{equation}
We are going to apply Lemma~\ref{lemma28} with parameters $q_0=1$, $q_1=\lambda p+(1-\lambda)2$, $\vartheta=1/2$, operators $B_n=M_{2^{n+1}}-M_{2^n}$ and functions $g_n=T_{\ZZ^\Gamma}[\Delta_{n,s}^{j,2}]f$. If $\lambda$ satisfy condition \eqref{eq:lambdacon}, then one has $q_{1/2}<p<q_1<2$. Furthermore, for any $q\in(1,\infty)$
\begin{align*}
    \sup_{n<N}\|B_n\|_{\ell^q\to \ell^q}&\lesssim 1\quad\text{and}\quad\|B_{\ast,n< N}\|_{\ell^q\to \ell^q}\lesssim \|V^3(M_{2^n} : n\in[0,N]\cap\NN_0)\|_{\ell^q\to\ell^q},
\end{align*}
where the last inequality follows by \eqref{domsup1}. By using the inequality \eqref{domweak} we get weak type estimates
\begin{align*}
    \norm[\big]{V^3(M_{2^n}f: n\in[0,N]\cap\NN_0)}_{\ell^{2,\,\infty}(\ZZ^\Gamma)}&\lesssim\|f\|_{\ell^2(\ZZ^\Gamma)},\\ \norm[\big]{V^3(M_{2^n} f : n\in[0,N]\cap\NN_0)}_{\ell^{p,\,\infty}(\ZZ^\Gamma)}&\lesssim R_p(N)\|f\|_{\ell^p(\ZZ^\Gamma)}.
\end{align*}
Since $p<q_1<2$, one may use Marcinkiewicz's interpolation theorem to get
\begin{equation*}
    \|B_{\ast,n< N}\|_{\ell^{q_1}\to \ell^{q_1}}\lesssim_p R_p(N)^{\frac{p(2-q_1)}{q_1(2-p)}}.
\end{equation*}
Therefore, by Lemma~\ref{lemma28} and inequality \eqref{eq:DPL1} one has
\begin{equation}\label{j1}
\begin{aligned}
   \Big\|\big(\sum_{n\in\JJ}|B_n T_{\ZZ^\Gamma}[\Delta_{n,\,s}^{j,\,2}]f|^2\big)^{1/2}\Big\|_{\ell^{q_{1/2}}(\ZZ^\Gamma)}\lesssim R_p(N)^{\frac{p(2-q_1)}{2q_1(2-p)}}\log(s+1)\|f\|_{\ell^{q_{1/2}}(\ZZ^\Gamma)},
\end{aligned}
\end{equation}
Since $q_{1/2}<p<2$, there exists $t\in(0,1)$ such that $\frac{1}{p}=\frac{t}{q_{1/2}}+\frac{1-t}{2}$. Hence, by the definition of $q_{1/2}$ one has
\begin{equation*}
    t\frac{p(2-q_1)}{2q_1(2-p)}=\frac{2-q_1}{2}.
\end{equation*}
Interpolating (\ref{m1}) with \eqref{j1} leads to
\begin{align*}
    \Big\|\big(\sum_{n\in\JJ}|B_n T_{\ZZ^\Gamma}[\Delta_{n,\,s}^{j,\,2}]f|^2\big)^{1/2}\Big\|_{\ell^p(\ZZ^\Gamma)}\lesssim(s+1)^{-u\delta(1-t)}2^{-|j|\beta(1-t)/2}R_{p}(N)^{\frac{2-q_1}{2}}\log(s+1)\|f\|_{\ell^p(\ZZ^\Gamma)}.
\end{align*}
Since $u\in\NN$ can be large, we get that \eqref{ex6} in the case of the jump quasi-seminorm is satisfied with $B_p(N)=R_p(N)^{(2-q_1)/2}$. Hence, we see that for $p\in(1,2]$ the inequality \eqref{pinf2} holds with $\beta(p)=\frac{2-q_1}{2}\in[0,1)$. In the case of $p\in(2,\infty)$ we again use the duality to obtain the desired result.
\subsection{Estimates for short variations}\label{sec:shortvar}
In this section we focus on bounding the short variations, namely we want to establish the following estimate
\begin{equation}\label{shjumps:toshow}
    \norm[\Big]{\Big(\sum_{l=0}^\infty V^2\big(M_n f: n\in[2^l,2^{l+1}]\cap\mathbb{U}\big)^2\Big)^{1/2}}_{\ell^p(\ZZ^\Gamma)}\lesssim_{\calS_p} \|f\|_{\ell^p(\ZZ^\Gamma)},\quad f\in \ell^p(\ZZ^\Gamma).
\end{equation}
For this purpose, for $L\in\NN$, let us consider the following cut-off  short variations
\begin{equation*}
    \Big(\sum_{l=0}^LV^2\big(M_n f: n\in[2^l,2^{l+1}]\cap\mathbb{U}\big)^2\Big)^{1/2}.
\end{equation*}
Let $R_{p}(L)$ denote the smallest constant for which the following estimate holds
\begin{equation}\label{shjump:ad1}
    \Big\|\Big(\sum_{l=0}^LV^2\big(M_n f: n\in[2^l,2^{l+1}]\cap\mathbb{U}\big)^2\Big)^{1/2}\Big\|_{\ell^p(\ZZ^\Gamma)}\le R_{p}(L)\|f\|_{\ell^p(\ZZ^\Gamma)},\quad f\in \ell^p(\ZZ^\Gamma).
\end{equation}
By estimate \eqref{square} we know that $R_{p}(L)\lesssim_{L,\,p} 1$. Using again the bootstrap argument we will show that $R_{p}(L)\lesssim_p1$. The proof will proceed in a similar way as in the case of the long jumps hence we will omit some details. Without loss of generality we can assume that $R_{p}(L)>1$ and $L\in\NN$ is large. Let $\chi\in(0,1/10)$ and let $u\in\NN$ be a fixed large number. For each $n\in\NN$ we define the following function
\begin{equation}\label{shjump:IWproj}
    \Xi_n(\xi):=\sum_{a/q\in\Sigma_{\leq n^u}}\eta\big(2^{n(A-\chi I)}(\xi-a/q)\big),
\end{equation}
where $\eta$ is a bump function of the form \eqref{bumpfun}, $I$ is the $|\Gamma|\times|\Gamma|$ identity matrix, $A$ is a matrix defined in Section~\ref{sec:longjumps} and $\Sigma_{\leq n^u}$ is the set of the Ionescu--Wainger rational fractions related to the parameter $\varrho=(10u)^{-1}$. Recall that we may write $M_n f=T_{\ZZ^\Gamma}[m_n]f$. Next, we use functions \eqref{shjump:IWproj} to estimate the left hand side of \eqref{shjump:ad1} by
\begin{align}
 \Big\|\Big(\sum_{l=0}^LV^2\big(T_{\ZZ^\Gamma}[\Xi_l&m_n]f : n\in[2^l,2^{l+1}]\cap\UU\big)^2\Big)^{1/2}\Big\|_{\ell^p(\ZZ^\Gamma)}\label{shjump:var1}\\
&+\Big\|\Big(\sum_{l=0}^LV^2\big(T_{\ZZ^\Gamma}[(1-\Xi_l)m_n]f : n\in[2^l,2^{l+1}]\cap\UU\big)^2\Big)^{1/2}\Big\|_{\ell^p(\ZZ^\Gamma)}.
\end{align}
Similar to the case of the long jumps, the first expression  corresponds to the major arcs and the second one to the minor arcs in the Hardy--Littlewood circle method.
\subsubsection{Minor arcs}
Again, we start with the estimate for the minor arcs since it is relatively easy and follows the same rule as in the case of the long jumps. By the triangle inequality it is enough to show
\begin{equation*}
    \big\|V^2\big(T_{\ZZ^\Gamma}[(1-\Xi_l) m_n]f : n\in[2^l,2^{l+1}]\cap\UU\big)\big\|_{\ell^p(\ZZ^\Gamma)}\lesssim(l+1)^{-2}\|f\|_{\ell^p(\ZZ^\Gamma)}.
\end{equation*}
We note that for each $l\in\NN_0$ only a finite set of numbers from $[2^l,2^{l+1}]\cap\UU$ give a contribution to the above variational seminorm. Hence it is enough to prove that for some $a(l)\in\NN$ we have
\begin{equation}\label{shjump:ex1}
    \big\|V^2\big(T_{\ZZ^\Gamma}[(1-\Xi_l) m_{n/2^{a(l)}}]f : n\in[2^{l+a(l)},2^{l+1+a(l)}]\cap\NN_0\big)\big\|_{\ell^p(\ZZ^\Gamma)}\lesssim(l+1)^{-2}\|f\|_{\ell^p(\ZZ^\Gamma)}.
\end{equation}
We can prove the estimate above in the same way as \eqref{ex1}, for details see \cite[Section 5.1, pp. 689--691]{MST2}
\subsubsection{Major arcs}
Now, our aim is to estimate \eqref{shjump:var1}. In this case we will follow the approach presented in the proof of the estimate for the long jumps. The case of the short jumps is in some way easier since there is no need to consider small and large scales. In order to estimate \eqref{shjump:var1} we introduce new multipliers
\begin{equation*}
    \Xi_l^{j}(\xi)=\sum_{a/q\in\Sigma_{\leq l^u}}\eta\big(2^{lA+jI}(\xi-a/q)\big),\quad j\in\ZZ.
\end{equation*}
Then one may write (compare with Section~\ref{sec:SsDPL})
\begin{equation*}
    \Xi_l(\xi)=\sum_{-\floor{\chi l}\le j<l}\big(\Xi_l^{j}(\xi)-\Xi_l^{j+1}(\xi)\big)+\big(\Xi_l^{-\chi l}(\xi)-\Xi_l^{-\floor{\chi l}}(\xi)\big)+\Xi_l^{l}(\xi).
\end{equation*}
Next, we use the new multipliers and estimate \eqref{shjump:var1} by
\begin{align}
    &\Big\|\Big(\sum_{l=0}^LV^2\big(T_{\ZZ^\Gamma}\big[m_n\big(\sum_{-\floor{\chi l}\le j<l}\Xi_l^{j}-\Xi_l^{j+1}\big)\big]f : n\in[2^l,2^{l+1}]\cap\UU\big)^2\Big)^{1/2}\Big\|_{\ell^p(\ZZ^\Gamma)}\label{ad3}\\
   &+\Big\|\Big(\sum_{l=0}^LV^2\big(T_{\ZZ^\Gamma}\big[(m_n-m_{2^l})(\Xi_l^{-\chi l}-\Xi_l^{-\floor{\chi l}}+\Xi_l^{l})\big]f : n\in[2^l,2^{l+1}]\cap\UU\big)^2\Big)^{1/2}\Big\|_{\ell^p(\ZZ^\Gamma)}\label{ad4}.
\end{align}
One can prove that $\eqref{ad4}\lesssim \|f\|_{\ell^p(\ZZ^\Gamma)}$ by proceeding in a similar way as in \cite[pp. 692--694]{MST2}.\\
\indent Now let us go back to \eqref{ad3}. For $p\in(1,2]$ it is enough to show
\begin{equation}
\begin{aligned}
    \Big\|\Big(\sum_{l=0}^LV^2\big(T_{\ZZ^\Gamma}\big[m_n\big(\sum_{-\floor{\chi l}\le j<l}\Xi_l^{j}-\Xi_l^{j+1}\big)\big]f : n\in[2^l,2^{l+1}]\cap\UU\big)^2&\Big)^{1/2}\Big\|_{\ell^p(\ZZ^\Gamma)}\lesssim R_{p}(L)^{\frac{2-p}{2}}\|f\|_{\ell^p(\ZZ^\Gamma)}.\label{pinfs3}
\end{aligned}
\end{equation}
For $p\in(2,\infty)$ we will show that
\begin{equation}
    {\rm LHS}\eqref{pinfs3}\lesssim R_{p'}(L)^{(p-2)/2(p-1)}\|f\|_{L^p(\ZZ^\Gamma)}\label{pinfl4}
\end{equation}
where $1/p+1/p'=1$.
\subsubsection{Estimates for \eqref{pinfs3} and discrete Littlewood--Paley theory}
Now, we take a look at the left hand side of \eqref{pinfs3} in the case of $p\in(1,\infty)$. Let $\tilde{\eta}(x):=\eta(x/2)$ and define a new multiplier
\begin{equation*}
    \Delta_{l,\,s}^j(\xi)=\sum_{a/q\in\Sigma_{s^u}}\big[\eta\big(2^{lA+jI}(\xi-a/q)\big)-\eta\big(2^{lA+(j+1)I}(\xi-a/q)\big)\big]\Tilde{\eta}\big(2^{s(A-\chi I)}(\xi-a/q)\big),
\end{equation*}
where $\Sigma_{s^u}:=\Sigma_{\leq(s+1)^u}\setminus\Sigma_{\leq s^u}$ for $s\in\NN$ and $\Sigma_{0^u}:=\Sigma_{\leq 1}$. We see that
\begin{equation}\label{poprawki:eq2}
    \Xi_l^j(\xi)-\Xi_l^{j+1}(\xi)=\sum_{s=0}^{l-1}\Delta_{l,\,s}^j(\xi).
\end{equation}
Consequently, if we use \eqref{poprawki:eq2} and change the order of summation we see that the estimate \eqref{pinfs3} will follow if we prove that
\begin{equation}\label{shjump:ex6.1}
\begin{aligned}
    \Big\|\Big(\sum_{\substack{0\le l\le L,\\l\ge\max\{-j/\chi,\,j-1,s-1,i\}}}V^2\big(T_{\ZZ^\Gamma}\big[m_n\Delta_{l,\,s}^j\big]f : n\in[2^l,&2^{l+1}]\cap\UU\big)^2\Big)^{1/2}\Big\|_{\ell^p(\ZZ^\Gamma)}\\
    &\lesssim (s+1)^{-2}B_p(L)2^{-|j|\beta}\|f\|_{\ell^p(\ZZ^\Gamma)},
\end{aligned}
\end{equation}
for some $\beta=\beta_p>0$ where for $p\in(1,2]$ the constant $B_p(L)$ is equal to $R_p(L)^{(2-p)/2}$ and for $p\in(2,\infty)$ we have $B_p(L)=R_{p'}(L)^{(p-2)/2(p-1)}$. Now, if we apply the Rademacher--Menshov inequality for the short jumps \cite[Lemma 2.5]{MSZ2} we see it is enough to establish
\begin{equation}\label{shjump:sqfun}
\begin{aligned}
    \Big\|\Big(\sum_{\substack{0\le l\le L,\\l\ge\max\{-j/\chi,\,j-1,s-1,i\}}}\sum_{m=0}^{2^i-1}|T_{\ZZ^\Gamma}\big[(&m_{2^l+2^{l-i}(m+1)}-m_{2^l+2^{l-i}m})\Delta_{l,\,s}^j\big]f|^2\Big)^{1/2}\Big\|_{\ell^p(\ZZ^\Gamma)}\\
    &\lesssim(s+1)^{-2}(i+1)^{-2}B_p(L)2^{-|j|\beta}\|f\|_{\ell^p(\ZZ^\Gamma)}.
\end{aligned}
\end{equation}
\subsubsection{Estimates for square function in \eqref{shjump:sqfun}} For simplicity we denote $B_{l,m}=M_{2^l+2^{l-i}(m+1)}-M_{2^l+2^{l-i}m}$. At first we will prove \eqref{shjump:sqfun} in the case of $p=2$. If we approximate discrete multipliers by their continuous counterparts by using \cite[Proposition 4.18]{MSZ3} we can use estimates \eqref{Phi1} and \eqref{gaussum}, and \cite[Proposition 4.16]{MSZ3} to prove that on the support of $\Delta_{l,\,s}^j$ we have
\begin{equation}
    |m_{2^l+2^{l-i}(m+1)}-m_{2^l+2^{l-i}m}|\lesssim \min\big\{2^{-i}, (s+1)^{-u\delta}(2^{-|j|/|\Gamma|}+2^{-l/4})\big\}.
\end{equation}
Consequently, one has
\begin{equation*}
    |m_{2^l+2^{l-i}(m+1)}-m_{2^l+2^{l-i}m}|^2\lesssim 2^{-3i/2}(s+1)^{-\delta u/2}(2^{-|j|/(2|\Gamma|)}+2^{-l/8}).
\end{equation*}
Hence, by Parseval's inequality we get
\begin{equation}\label{shjump:m1}
\begin{aligned}
   \Big\|\big(\sum_{(l,m)\in\JJ}|B_{l,m}T_{\ZZ^\Gamma}[\Delta_{l,\,s}^j]f|^2\big)^{1/2}\Big\|_{\ell^{2}(\ZZ^\Gamma)}\lesssim 2^{-i/4}(s+1)^{-u\alpha/4}2^{-|j|\beta}\|f\|_{\ell^{2}(\ZZ^\Gamma)},
\end{aligned}
\end{equation}
with $\mathbb{J}:=\{(l,m)\in\ZZ^2:l\in[l_0,L],v\in[0,2^i-1]\}$ where $l_0=\max\{-j/\chi,\,j-1,\,s-1,i\}$.\\
\indent In the case $p\in(1,2]$ we will use discrete Littlewood--Paley theory (see Section~\ref{sec:SsDPL}) and the bootstrap Lemma~\ref{lemma28}. We will apply it with parameters $q_0=1$, $q_1=p$, $\vartheta=1/2$, to a countable set $\mathbb{J}$, the operator $B_{l,m}$ and the functions $g_{l,m}=T_{\ZZ^\Gamma}[\Delta_{l,\,s}^j]f$. It is easy to check that for every $q\in(1,\infty)$ we have
\begin{align*}
    \sup_{(l,m)\in\mathbb{J}}\|B_{l,m}\|_{L^1\to L^1}&\lesssim 2^{-i}\quad\text{and}\quad\|B_{\ast,\mathbb{J}}\|_{L^q\to L^q}\lesssim R_{q}(L).
\end{align*}
The first inequality follows from \cite[Proposition 4.16]{MSZ3} and the last inequality follows from the fact that for any $m=0,\ldots,2^i-1$ and any $l\in\NN$ one has the following pointwise estimate
\begin{equation*}
    |M_{2^l+2^{l-i}(m+1)}f-M_{2^l+2^{l-i}m}f|\lesssim V^2\big(M_n f:n\in[2^l,2^{l+1}]\cap\UU\big).
\end{equation*}
Therefore, by applying Lemma~\ref{lemma28} and inequality \eqref{eq:DPL1} we obtain
\begin{equation}\label{shjump:m}
\begin{aligned}
   \Big\|\big(\sum_{(l,m)\in\JJ}|B_{l,m}T_{\ZZ^\Gamma}[\Delta_{l,\,s}^j]f|^2\big)^{1/2}\Big\|_{\ell^{q_{1/2}}(\ZZ^\Gamma)}\lesssim R_{p}(L)^{1/2}\log(s+1)\|f\|_{\ell^{q_{1/2}}(\ZZ^\Gamma)}.
\end{aligned}
\end{equation}
Since $q_{1/2}<p\le2$, we can interpolate \eqref{shjump:m1} with \eqref{shjump:m} to get that \eqref{shjump:sqfun} holds in the case of $p\in(1,2]$. When $p\in(2,\infty)$ the desired result follows by duality since $B_{l,m}$ is a convolution operator.
 \subsection{Comments and remarks}
 Let us point out several remarks which can be made after reading our proof:
 \begin{enumerate}
    \item The proof of Theorem~\ref{Thm1:bootstrap} was reduced to proving the estimates for the long jumps and short variations related to the dyadic integers $\DD:=\{2^{n}:n\in\NN\}$. Here one could also apply a slightly different approach. Namely, for a fixed $\tau\in(0,1)$ we consider the subexponential dyadic integers $\DD_\tau:=\{2^{n^\tau}:n\in\NN_0\}$ and use Proposition~\ref{prop:longandshort} to get 
    \begin{equation*}
        \calS_p(M_t f:t>0)\lesssim_{\calS_p}\calS_p(M_{n}f:n\in\DD_\tau)+\norm[\Big]{\Big(\sum_{n=0}^\infty V^2\big(M_tf:t\in[2^{n^\tau},2^{(n+1)^\tau}]\big)^2\Big)^{1/2}}_{\ell^p(\ZZ^\Gamma)}.
    \end{equation*}
    Then the short variations can be estimated by using arguments from \cite{zk} (see also \cite{MSS,MSZ3}), which reduces matters to showing that
    \begin{equation*}
        \calS_p(M_{n}f:n\in\DD_\tau)\lesssim_{\calS_p}\|f\|_{\ell^p(\ZZ^\Gamma)}.
    \end{equation*}
    The inequality above can be proved by using techniques presented in the proof of estimates for the long jumps in Section~\ref{sec:longjumps}. There could occur some technical nuances in defining the Ionescu--Wainger projections \eqref{IWproj}, for details see \cite{MSS,MSZ3}. We decided to take the presented approach since we wanted to show that the bootstrap approach is possible in the context of the short variations. 
     \item It is not hard to notice that one could use our approach to prove the seminorm inequalities 
     \begin{equation*}
         \calS_p(M_{2^n}f:n\in\NN)\lesssim_{\calS_p} \|f\|_{\ell^p(\ZZ^\Gamma)},
     \end{equation*}
     where the seminorm $\calS_p$ is not of the form \eqref{supnorm}--\eqref{varnorm}. It is enough for $\calS_p$ to be subadditive, satisfy the square function estimate \eqref{square}, have some properties of splitting and to satisfy the Rademacher--Menshov inequality \eqref{eq:remark3}. Moreover, we need to know that the appropriate continuous counterpart holds. Besides that, it does not even have to control the maximal function like \eqref{domsup1}, \eqref{domsup2} or even \eqref{domweak}.
     In that case, in Lemma~\ref{lemma28} we can just control the norm of the maximal function $\|B_{\ast,\JJ}\|$ by the norm of the usual maximal function $\sup_{t>0}|M_t|$ since we know that the latter one is $\ell^p$-bounded.
     \item Our approach can be modified and applied to the truncated singular Radon operator,
      \begin{equation*}
     H_t f(x):=\sum_{y\in\Omega_t\cap\ZZ^k\setminus\{0\}}f(x-(y)^\Gamma)K(y),
        \end{equation*}
        where $K\colon\RR^{k}\setminus\{0\} \to \CC$ is a  Calder\'on--Zygmund kernel satisfying appropriate conditions. We can prove the inequality
        \begin{equation}\label{eq:remark4}
            \calS_p(H_tf:t>0)\lesssim_{\calS_p}\|f\|_{\ell^p(\ZZ^\Gamma)},
        \end{equation}
        where the seminorm $\calS_p$ is one of \eqref{oscillnorm}--\eqref{varnorm} (we excluded the maximal seminorm). It is known that the inequality \eqref{eq:remark4} holds for all seminorms \eqref{supnorm}--\eqref{varnorm}, see \cite{MST1,MST2,MSZ3,S}.  The main difference between $M_t$ and $H_t$ is that the operator $H_t$ is not positive, hence to use Lemma~\ref{lemma28} (i.e. to estimate $\|B_{\ast,\JJ}\|$) we need to know that
        \begin{equation*}
            \|\sup_{t>0}|H_t f|\|_{\ell^p(\ZZ^\Gamma)}\lesssim_p\|f\|_{\ell^p(\ZZ^\Gamma)}.
        \end{equation*}
        This is very consistent with the continuous case (see \cite[Theorem 2.28]{MSZ2}).
 \end{enumerate}

\end{document}